

\documentclass[preprint,times,12pt]{elsarticle}




\usepackage{amssymb}
\usepackage{amsthm}
\usepackage{amsmath,amsfonts,amssymb,euscript}
\usepackage{dsfont}
\usepackage{picinpar,rotating,amsbsy,calc,ifthen}
\usepackage{longtable,makeidx}
\usepackage{graphics}
\usepackage{graphicx}
\usepackage{epsfig}
\usepackage{epstopdf}
\usepackage{float}
\usepackage{subfigure}
\usepackage{color}
\usepackage{placeins}
\usepackage{tikz}
\usetikzlibrary{arrows}
\usepackage{soul}
\usepackage{enumerate}

\newtheorem{newthm}{Proposition}

\newtheorem{newlem}{Lemma}
\newtheorem{newass}{Assumption}

\newtheorem{newrem}{Remark}
\newtheorem{exmp}{Example}[section]



\journal{Physica D}

\begin{document}

\begin{frontmatter}



\title{Dynamics of Transcription-Translation Networks
}

\address[label1]{Department of Mathematics and Statistics, University of Victoria,
PO Box 1700, STN CSC, Victoria, BC, V8W 2Y2, Canada}

\author[label1]{D. Hudson}
\ead{drhh@uvic.ca}

\author[label1]{R. Edwards\corref{cor1}}
\ead{edwards@uvic.ca}

\cortext[cor1]{Corresponding author at: Tel.: +250-721-7453; Fax: +250-721-8962}

\date{\today}

\begin{abstract}
A theory for qualitative models of gene regulatory networks has been developed over several decades, generally considering transcription factors to regulate directly the expression of other transcription factors, without any intermediate variables. Here we explore a class of models that explicitly includes both transcription and translation, keeping track of both mRNA and protein concentrations. We mainly deal with transcription regulation functions that are steep sigmoids or step functions, as is often done in protein-only models, though translation is governed by a linear term. We extend many aspects of the protein-only theory to this new context, including properties of fixed points, description of trajectories by mappings between switching points, qualitative analysis via a state-transition diagram, and a result on periodic orbits for negative feedback loops. We find that while singular behaviour in switching domains is largely avoided, non-uniqueness of solutions can still occur in the step-function limit.

\end{abstract}

\begin{keyword}

Gene Regulatory Networks \sep Piecewise-linear \sep Singular Perturbation \sep Transcription-Translation



\MSC 92C42 \sep 34A36 \sep 92C40

\end{keyword}

\end{frontmatter}


	\section{Introduction}\label{sec:intro}
	
	Models of gene regulatory networks often omit many biochemical details, partly because parameters in specific systems are often not well known, but also because it is argued that qualitative behaviour, at least, will be similar in simplified models. For example, a good deal of work on developing general-purpose tools for analysis of the behaviour of gene networks has modeled only concentrations of proteins that act as transcription factors, as if these proteins directly regulated production of other proteins. We know that this is not really the case --- proteins regulate the transcription of mRNA's that in turn produce proteins by translation. There may also be post-translational modifications to a protein before it is effective as a regulator. It is often argued that the time scales of the dynamics of mRNA and protein are vastly different, so that it is not unreasonable to consider mRNA dynamics to be infinitely fast, so that only the protein variables need be retained in a model. 
	
	Typically, mRNA decay rates are significantly higher than those of proteins, or equivalently, protein half life tends to be longer. However, these time scales may not always be so different and the range of ratios of these decay rates is highly variable across genes and organisms (see, for example,~\cite{Bernstein2002,Bionumbers,Gedeon2012,Mosteller1980,Paetkau2006}). In previous work, it has been shown that behaviour of transcription-translation networks and the correspondong protein-only networks can differ qualitatively, even in some cases when time scales of the two types of variable are very different (but not infinitely different)~\cite{Edwards2014,Gedeon2012}.
	
	This observation makes it desirable to develop a method of analysis for trans-cription-translation networks. One can still use the simplifying assumption that the regulation (promotion or repression) becomes effective sharply at a particular threshold, so that the regulatory effect as a function of protein concentration is a very steep sigmoid, or even infinitely steep. A start to an analysis of such systems was made in a previous paper~\cite{Edwards2014}, but the focus there was on a comparison of the transcription-translation system to its protein-only counterpart. One of the main advantages of a transcription-translation model, from the point of view of analysis, is that there is no self-input of any variable as a regulator of its own production. If, biochemically, a gene is autoregulating, the process is now modeled as a feedback loop between the gene's mRNA transcript, and the corresponding translated protein. Thus, the difficulties that arise in protein-only networks with `black walls' (trajectories approach a threshold hyperplane from both sides), `white walls' (trajectories move away from a threshold hyperplane on both sides) and sliding in walls (trajectories confined to a threshold hyperplane for a nonzero time interval, while moving in other variables) no longer arise. There are still sensitive behaviours at intersections of walls that require careful analysis, but the problems of singular flow seem to be avoided in typical solution trajectories.
	
	On the other hand, even in the case of infinitely steep switching, maps between threshold transitions are no longer as easy to calculate, and contrary to the protein-only case, trajectories can reverse direction without crossing a threshold. These issues are explored more fully here, and we show that, in fact, the direction reversal leads to particular trajectories that graze a threshold hyperplane tangentially, leading once again to non-uniqueness of some solutions in the infinitely steep switching case (Section~\ref{subsec:nonunique}). It is possible, however, to divide phase space up into regions (which will here be called {\it pseudo-state domains}) in such a way that flows are logically captured by a directed graph in which nodes represent regions, in a similar way to what is done for protein-only networks, even though here, only half the variables have thresholds (Section~\ref{sec:graphrep}). Negative feedback loops still correspond to cycles on such a state-transition graph, and with appropriate parameter values, these have a corresponding unique locally stable periodic solution that is also qualitatively stable with respect to the (adjacent) boxes through which it passes. 
	
	We investigate a number of other properties of trajectories of the transcription-translation model, in a way that parallels the theory for protein-only networks. For example, we show that a fixed point in a regular domain (we use this term also in the limit of infinitely-steep switching, where it becomes a region of phase space bounded by threshold hyperplanes) is still necessarily asymptotically stable, but not globally with respect to that regular domain, unlike the protein-only case (Section~\ref{subsec:fixedpoints}). In Sections~\ref{subsec:localsol} and \ref{sec:globaldynam}, we determine how to calculate the map from one threshold transition to the next, though in practice this requires numerically finding a root of a transcendental equation in most situations (this was partially done in~\cite{Edwards2014}, but not every case was covered there). We finish with a summary of what has been achieved and discussion of implications.
	
	\section{The Protein Only Model}\label{sec:prelim}
		In this work, we are interested in qualitative descriptions of gene regulatory networks. A class of simplified models, proposed by Glass~\cite{Glass1977}, and elaborated by others (for example \cite{EI2013,Farcot2006,pk2005}), describe $n$-gene networks by an $n$-dimensional system of differential equations with either a step function or a sigmoidal interaction term. Using the notation of Plahte and Kj{\o}glum, the equations are
			\begin{equation}
				\dot{y}_{i}=F_{i}(Z)-\beta_{i}y_{i}, \ \ i=1,\dots,n\,,
			\label{Model1}
			\end{equation}
where $\beta_{i}>0$ is constant and $Z=(Z_{11},\dots,Z_{np_{n}})$ is a vector of sigmoid functions $Z_{ij}=\mathcal{S}(y_{i},\theta_{ij},q)$ satisfying a number of conditions laid out in their paper~\cite{pk2005}. Here $y_{i}$ denotes the concentration of the $i^{th}$ protein, $\theta_{ij}$ is the switching threshold of $Z_{ij}$, $j\in \{0,1,\dots,p_{i}\}$, and $q$ is a steepness parameter. The functions $F_{i}(Z)\geq 0$ are multilinear polynomials, i.e., affine with respect to each $Z_{ij}$. Inherently, production rates are bounded, so there exist positive constants $\bar{F}_{i}$ such that $0 \leq F_{i}(Z) \leq \bar{F}_{i}$ for each $i\in \{1, \dots, n\}$. We define $\theta_{i0}=0$ and $\theta_{i,p_{i}+1}=y_{i,\textup{max}}:= \frac{\bar{F}_{i}}{\beta_{i}}$. 
		
		As in~\cite{EI2013}, we take $\mathcal{S}(y_{i},\theta_{ij},q)$ to be the Hill function $H(y_{i},\theta_{ij},q)$,
			\begin{equation}
				H(y_{i},\theta_{ij},q)=\dfrac{y_{i}^{\frac{1}{q}}}{y_{i}^{\frac{1}{q}}+\theta_{ij}^{\frac{1}{q}}}.
			\label{eqn;hillfunc}
			\end{equation}
		Note that 
			\begin{equation*}
				\lim_{q \rightarrow 0}H(y_{i},\theta_{ij},q)= \Bigg \{
					\begin{array}{lcl}
						0 & \textup{if} & y_{i}<\theta_{ij} \\
						1 & \textup{if} & y_{i}>\theta_{ij} 
					\end{array}
					.
			\end{equation*}
		Since for each gene $i$ we assign one equation, we refer to (\ref{Model1}) as Model 1.
		
		In the limit as $q \rightarrow0$, phase space can be divided into boxes, 
		\[\mathcal{B}_{j_{1}\dots,j_{n}}=\prod_{i=1}^{n}(\theta_{ij_{i}},\theta_{i,j_{i}+1})\,,\quad j_{i}\in \{0,1,\dots,p_{i}\}\,,\] 
		separated by threshold hyperplanes. Flow in each box is directed towards a focal point $\Phi_i=\frac{F_i(Z)}{\beta_i}$, for the value of the binary vector $Z$ appropriate to the box ($Z_{ij}=0$ if $y_i<\theta_{ij}$ and $Z_{ij}=1$ if $y_i>\theta_{ij}$). If a fixed point lies inside its own box, then no switching occurs and the trajectory converges asymptotically to the focal point, which is then an asymptotically stable fixed point (this straightforward result has been observed many times; see, for example,~\cite{GK1973,Glass1975,gs2002,Wittmann2009}). Otherwise, mappings from threshold to threshold can be calculated. One can apply these maps iteratively to get a long term mapping that one can use to give conditions for existence and stability of periodic solutions. See, for instance,~\cite{Edwards2000} or~\cite{Farcot2006}. 
		 
	\section{The Transcription-Translation Model}\label{sec:modelintro}
	\hspace{.25in}
		A $2n$-dimensional model explicitly describing both the transcription and translation steps has been proposed in~\cite{Edwards2014} and~\cite{Gedeon2012}:
			\begin{equation}
				\begin{array}{lcl} 
					\dot{x}_{i} & = & F_{i}(Z)-\beta_{i}x_{i} \\
					\dot{y}_{i} & = & \kappa_{i}x_{i}-\gamma_{i}y_{i}
				\end{array}
			\ \ \  i=1,\dots, n\,,
			\label{def;Model2}
			\end{equation}
		which we refer to as Model 2 henceforth. In Model 2, $x_{i}$ represents the concentration of the $i^{th}$ mRNA and $y_{i}$ represents the concentration of the protein product for gene $i$. We take $Z=(Z_{1j},\dots, Z_{np_{n}})$, where each $Z_{ij}$ is as before. Again, we take $\mathcal{S}(y_{i},\theta_{ij},q)=H(y_{i},\theta_{ij},q)$ to be the Hill function defined in (\ref{eqn;hillfunc}). We take each $F_{i}$ and $\beta_{i}$ to be defined as before, and add that $\gamma_{i}>0$, and $\kappa_{i}>0$. All the examples we present will deal with the limit case $q\rightarrow 0$, but the main results will be shown for both $q\rightarrow 0$ and for $q>0$.
		
		We first note that since $\dot{y}_{i}$ is independent of $Z_{ij}$, all threshold hyperplanes $y_{i}=\theta_{ij}$ are \textit{transparent}, i.e. solution trajectories pass through them. 
		
		The threshold hyperplanes $y_{i}=\theta_{ij}$ divide $\mathbb{R}^{2n}$ into regions that we call regular domains. To be more precise, we adapt some notation from~\cite{Farcot2006}: let $\mathbb{N}_{p_{i}}=\{0,1,\dots, p_{i}\}$ and let $\mathcal{H}=\prod_{i=1}^{n} \mathbb{N}_{p_{i}}$. For consistency, we declare that $\theta_{i,0}=0$ and $\theta_{i,p_{i}+1}=y_{i,\textup{max}}$. It follows that $y_{i}$ has $p_{i}$ thresholds. Let $h\in \mathcal{H}$. We define a \textit{Regular Domain}, $\mathcal{D}_{h}$, in the limit $q\to 0$, to be
			\begin{equation}
				\mathcal{D}_{h}=\mathcal{D}_{h_{1},\dots,h_{n}}=\mathbb{R}_{+}^{n}\times\prod_{i=1}^{n}(\theta_{i,h_{i}},\theta_{i,h_{i}+1}),\ \ h_{i}\in \mathbb{N}_{p_{i}}.
			\end{equation} 
			Note that for $q>0$, the intervals $(\theta_{i,h_{i}},\theta_{i,h_{i}+1})$ have to be replaced by $(\theta_{i,h_{i}}+\delta(q),\theta_{i,h_{i}+1}-\delta(q))$, where $\delta(q)\to 0$ as $q\to 0$, and the switching regions have thickness that vanishes as $q\to 0$.
		Inside a regular domain none of the $y_{i}$ are at threshold value. For $0<q\ll 1$, the sigmoid vector $Z$ can be approximated by a binary vector $B$, and it converges to $B$ as $q\rightarrow0$. Thus, inside regular domains in the limit as $q\rightarrow0$, each $F_{i}(Z)$ is a constant, $\alpha_i$ (which implicitly still depends on $Z$, of course). Consequently, in a regular domain $\mathcal{D}_{h}$, Equations~(\ref{def;Model2}) can be solved uniquely in the limit as $q\rightarrow 0$ and these solutions will hold until one of the $y_{i}$ hits a threshold. Solutions must be directed towards a focal point, 
		\begin{equation} \Phi=(x^*,y^*)=(x_1^*,\ldots,x_n^*,y_1^*,\ldots,y_n^*)\mbox{ where }(x_i^*,y_i^*)=\left(\frac{\alpha_i}{\beta_i},\frac{\kappa_i \alpha_i}{\gamma_i \beta_i}\right)\,,\label{eq:fp}\end{equation}
		monotonically in each $x_i$, but not necessarily in each $y_i$.
		 		
	\section{Local Dynamics in a Regular Domain}\label{sec:localdynam}
	\hspace{.25in}
		In this section we talk about local dynamics in regular domains, and compare with local dynamics in Model 1. For what follows, we make the following assumption:
			\begin{newass}
				No focal point, $\varPhi=(x^*,y^*)$, from (\ref{eq:fp}), for any binary vector $Z$, lies on a threshold, i.e. $\frac{\kappa_i \alpha_i}{\gamma_i \beta_i}=\frac{\kappa_i}{\gamma_i \beta_i}F_i(Z) \neq \theta_{i,h_{i}}$ for any $i>0$ and $h_{i}\in \mathbb{N}_{p_{i}}$.
			\label{ass; ass1}
			\end{newass}

		\subsection{Solutions in a Regular Domain}\label{subsec:localsol}
		We begin by solving (\ref{def;Model2}) in a regular domain, $\mathcal{D}_{h}$ for $q\rightarrow0$. In $\mathcal{D}_{h}$, (\ref{def;Model2}) takes the form  
			\begin{equation}
				\begin{array}{lcl} 
					\dot{x}_{i} & = & \alpha_{i}-\beta_{i}x_{i} \\
					\dot{y}_{i} & = & \kappa_{i}x_{i}-\gamma_{i}y_{i}
				\end{array}
			\ \ \  i=1,\dots, n.
			\label{eqn;reg}
			\end{equation}
		Let $N=\{1,\dots,n\}$. Let $J=\{i\in N\ |\ \beta_{i}=\gamma_{i}\}$ and let $I=N\setminus J=\{i\in N\ |\ \beta_{i}\neq \gamma_{i}\}$. We can then write (\ref{eqn;reg}) as 
			\begin{equation}
				\begin{array}{ll}
					\begin{array}{lcl}
						\dot{x}_{i} & = & \alpha_{i}-\beta_{i}x_{i} \\
						\dot{y}_{i} & = & \kappa_{i}x_{i}-\gamma_{i}y_{i}
					\end{array} & i \in I\,, \\
					\begin{array}{lcl}
						\dot{x}_{j} & = & \alpha_{j}-\gamma_{j}x_{j} \\
						\dot{y}_{j} & = & \kappa_{j}x_{j}-\gamma_{j}y_{j}
					\end{array} & j \in J\,,
				\end{array}
			\label{eqn;generalreg}
			\end{equation} 
		with initial conditions $x_{i}(0)=x_{i}^{(h)}$, $x_{j}(0)=x_{j}^{(h)}$, $y_{i}(0)=y_{i}^{(h)}$, and $y_{j}(0)=y_{j}^{(h)}$, and $(x_{i}^{(h)},y_{i}^{(h)},x_{j}^{(h)},y_{j}^{(h)})\in \mathcal{D}_{h}$. Solving the first and third equation in (\ref{eqn;generalreg}) gives
			\begin{equation}
				\begin{array}{ll}
					x_{i}(t)=\alpha_{i}/\beta_{i}+(x_{i}^{(h)}-\alpha_{i}/\beta_{i})e^{-\beta_{i}t}, & i \in I \\ 
					x_{j}(t)=\alpha_{j}/\gamma_{j}+(x_{j}^{(h)}-\alpha_{j}/\gamma_{j})e^{-\gamma_{j}t}, & j \in J\,.
				\end{array}
			\label{eqn;regxsol}
			\end{equation}
		We insert these equations into the second and fourth equations from (\ref{eqn;generalreg}) and rearrange to get
			\begin{equation*}
				\begin{array}{l}
					\dot{y}_{i}+\gamma_{i}y_{i}=\kappa_{i}\alpha_{i}/\beta_{i}+\kappa_{i}(x_{i}^{(h)}-\alpha_{i}/\beta_{i})e^{-\beta_{i}t}\\
					\dot{y}_{j}+\gamma_{j}y_{j}=\kappa_{j}\alpha_{j}/\gamma_{j}+\kappa_{j}(x_{j}^{(h)}-\alpha_{j}/\gamma_{j})e^{-\gamma_{j}t}\,.
				\end{array}
			\end{equation*}
		Solving gives
			\begin{equation}
				\begin{array}{l}
					y_{i}(t)=\dfrac{\kappa_{i}\alpha_{i}}{\beta_{i}\gamma_{i}}+\dfrac{\kappa_{i}(x_{i}^{(h)}-\alpha_{i}/\beta_{i})}{\gamma_{i}-\beta_{i}}e^{-\beta_{i}t}+\left(y_{i}^{(h)}-\dfrac{\kappa_{i}\alpha_{i}}{\beta_{i}\gamma_{i}}-\dfrac{\kappa_{i}(x_{i}^{(h)}-\alpha_{i}/\beta_{i})}{\gamma_{i}-\beta_{i}}\right)e^{-\gamma_{i}t} \\
					\\
					y_{j}(t)=\dfrac{\kappa_{j}\alpha_{j}}{\gamma_{j}^{2}}+\left(\kappa_{j}x_{j}^{(h)}-\dfrac{\kappa_{j}\alpha_{j}}{\gamma_{j}}\right)te^{-\gamma_{j}t}+\left(y_{j}^{(h)}-\dfrac{\kappa_{j}\alpha_{j}}{\gamma_{j}^{2}}\right)e^{-\gamma_{j}t}\,.
				\end{array}
			\label{eqn;regysol}	
			\end{equation}
		It follows that in the regular domain $\mathcal{D}_{h}$, the solution to (\ref{eqn;generalreg}) is given by (\ref{eqn;regxsol}) and (\ref{eqn;regysol}). Since all the threshold hyperplanes in Model 2 (equations~(\ref{def;Model2})) are transparent, the solution can be continuously extended from one regular domain to an adjacent one by concatenating trajectory segments calculated between threshold intersections.  
		
		It is clear that in each variable, $x_i$ simply approaches $\frac{\alpha_i}{\beta_i}$ exponentially. In the $(x_i,y_i)$-plane, the $y_i$ null cline is the line $x_i=\frac{\gamma_i}{\kappa_i}y_i$. For larger $x_i$ values (or smaller $y_i$ values), $y_i$ is increasing, while for smaller $x_i$ values (or larger $y_i$ values), $y_i$ is decreasing. Fixed points fall on the intersection of the null clines $x_i=\frac{\alpha_i}{\beta_i}$ and $x_i=\frac{\gamma_i}{\kappa_i}y_i$ when this falls inside $(\theta_{h_i},\theta_{h_{i+1}})$.
		
		We note that these solutions lack monotonicity in $y_{i}$. This feature gives rise to dynamics not seen in Model 1, including solutions that graze a threshold tangentially. 
			
	\subsection{Fixed Points in Regular Domains}\label{subsec:fixedpoints}
		We start with a result that carries over from Model 1, which we rely on throughout.
		\begin{newthm}
			Let $(x^{*},y^{*})=(x^{*}_{1},\dots,x^{*}_{n},y_{1}^{*},\dots,y_{n}^{*})$, $x^*_{i}=\frac{\alpha_{i}}{\beta_{i}}$, $y^*_{i}=\frac{\kappa_{i}\alpha_{i}}{\gamma_{i}\beta_i}$, be a fixed point of (\ref{eqn;reg}) in a regular domain $\mathcal{D}_{h}$ of Model 2 in the limit $q\to 0$ (i.e., the focal point $\Phi=(x^{*},y^{*})$ for $\mathcal{D}_{h}$ lies inside $\mathcal{D}_{h}$). Then, $(x^{*},y^{*})$ is a locally asymptotically stable node. The same is true with $q$ sufficiently small for the corresponding, slightly shifted, fixed point.
		\label{stableprop}
		\end{newthm}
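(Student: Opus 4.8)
The plan is to prove asymptotic stability by linearisation, exploiting the fact that inside a regular domain the spectrum of the Jacobian is trivial to compute. First, for the limit $q\to 0$: inside $\mathcal{D}_{h}$ the right-hand side of (\ref{eqn;reg}) is affine, and the $n$ pairs $(x_{i},y_{i})$ are mutually decoupled (each $\alpha_{i}$ is constant on the domain). The Jacobian is therefore block diagonal, with $2\times 2$ blocks $\left(\begin{smallmatrix}-\beta_{i} & 0\\ \kappa_{i} & -\gamma_{i}\end{smallmatrix}\right)$. Each block is lower triangular, so its eigenvalues are $-\beta_{i}$ and $-\gamma_{i}$; since $\beta_{i},\gamma_{i}>0$, all $2n$ eigenvalues are real and strictly negative, and the focal point is a (hyperbolic) stable node, improper when $\beta_{i}=\gamma_{i}$. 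Local asymptotic stability is then immediate, and can also be read directly off the closed-form solutions (\ref{eqn;regxsol})--(\ref{eqn;regysol}), whose transients decay like $e^{-\beta_{i}t}$, $e^{-\gamma_{j}t}$ and $t\,e^{-\gamma_{j}t}$.

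For $q>0$ I would first locate the shifted fixed point. Setting the right-hand side of (\ref{def;Model2}) to zero gives $x_{i}=F_{i}(Z)/\beta_{i}$, hence the closed equation $y_{i}=\frac{\kappa_{i}}{\gamma_{i}\beta_{i}}F_{i}(Z(y))$ for the protein coordinates. By Assumption~\ref{ass; ass1} the limiting focal point lies strictly inside $\mathcal{D}_{h}$, a fixed positive distance from every threshold. Since, interior to the domain, the Hill vector $Z(y)$ and its $y$-derivatives converge to the constant binary value $B$ and to zero respectively as $q\to 0$, the right-hand side above is a small smooth perturbation of the constant $y^{*}$, so a unique fixed point persists near $y^{*}$ and converges to it.

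It remains to control the Jacobian $\mathcal{J}(q)$ at this fixed point. Its only entries differing from the block-diagonal limit are $\partial\dot{x}_{i}/\partial y_{k}=\sum_{j}\frac{\partial F_{i}}{\partial Z_{kj}}\,\frac{\partial H}{\partial y_{k}}(y_{k},\theta_{kj},q)$. The crux is the estimate that, for $y_{k}$ bounded away from $\theta_{kj}$, the Hill derivative vanishes as $q\to 0$: writing $u=(y_{k}/\theta_{kj})^{1/q}$ one finds $\frac{\partial H}{\partial y_{k}}=\frac{1}{q\,y_{k}}\frac{u}{(1+u)^{2}}$, and for $y_{k}\neq\theta_{kj}$ the factor $u$ tends to $0$ or $\infty$ exponentially in $1/q$, overwhelming the $1/q$ prefactor. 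As the coefficients $\partial F_{i}/\partial Z_{kj}$ are bounded (each $F_{i}$ is multilinear on a bounded domain), the off-block entries vanish and $\mathcal{J}(q)\to\mathcal{J}(0)$. Since the eigenvalues of a matrix depend continuously on its entries, for $q$ small enough they stay close to $\{-\beta_{i},-\gamma_{i}\}$ and in particular keep negative real part, giving local asymptotic stability; when all $\beta_{i}\neq\gamma_{i}$ the limiting eigenvalues are simple, so they remain real and the point remains a node.

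The main obstacle is the Hill-derivative estimate together with the requirement that the perturbed fixed point stay uniformly bounded away from the thresholds as $q\to 0$; this is exactly where Assumption~\ref{ass; ass1} is used, and it is what licenses both the persistence of the fixed point and the vanishing of the off-diagonal Jacobian entries. Once these are established, the conclusion follows from the standard continuity of the spectrum, and no delicate estimate on the rate of convergence is needed.
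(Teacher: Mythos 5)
Your proposal is correct and follows essentially the same route as the paper: compute the block-triangular Jacobian with eigenvalues $-\beta_{i}$, $-\gamma_{i}$ in the limit $q\to 0$, then for $q>0$ use the vanishing of the Hill derivative away from thresholds together with continuity of eigenvalues. In fact you supply details the paper leaves implicit --- the persistence of the shifted fixed point and the explicit estimate showing $\partial H/\partial y_{k}\to 0$ --- and your caveat about the node property when some $\beta_{i}=\gamma_{i}$ (where the repeated eigenvalue could perturb to a complex pair for $q>0$, preserving stability but not necessarily the node classification) is a point the paper itself glosses over.
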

		\begin{proof}
			In a regular domain with $q\rightarrow0$, $Z$ is a fixed binary vector, so we can write $F_{i}(Z)=\alpha_{i}$, and system (\ref{def;Model2}) becomes (\ref{eqn;reg}). 
			
			The Jacobian of (\ref{eqn;reg}) at $(x^{*},y^{*})$ is then
				\begin{equation*}J(x^{*},y^{*})= \left(
					\begin{array}{cccccccc}
						-\beta_{1} & 0 & \dots & 0 & 0 & 0 & \dots & 0 \\
						0 & -\beta_{2} & \dots & 0 & 0 & 0 & \dots & 0 \\
						\vdots & \ddots & \dots & \ddots & \ddots & \ddots & \dots & \vdots \\
						0 & 0 & \dots & -\beta_{n} & 0 & 0 & \dots & 0 \\
						\kappa_{1} & 0 & \dots & 0 & -\gamma_{1} & 0 & \dots & 0 \\
						0 & \kappa_{2} & \dots & 0 & 0 & -\gamma_{2} & \dots & 0 \\ 
						\vdots & \ddots & \dots & \ddots & \ddots & \ddots & \dots & \vdots \\
						0 & 0 & \dots & \kappa_{n} & 0 & 0 & \dots & -\gamma_{n} 
					\end{array}
					\right)
				\end{equation*}
			which clearly has eigenvalues $-\beta_{i}$ and $-\gamma_{i}$ for $i=1, \dots, n$. Since $\beta_{i}>0$ and $\gamma_{i}>0$ for each $i$, all the eigenvalues are negative, so $(x^{*},y^{*})$ is locally asymptotically stable.
				
			For $q>0$ it can be shown that 
				\begin{equation*}
					\lim_{q\rightarrow 0}\frac{dZ_{mj}}{d{y_{m}}}=0
				\end{equation*}
			for all $m$ and $j$. Thus, for $q>0$ sufficiently small, by continuity of eigenvalues with respect to matrix entries, the result still holds.
		\end{proof}
		
We emphasize the locality of this result, as it is shown below that it is possible to enter a regular domain $\mathcal{D}_{h}$ containing a fixed point, and not converge to it. This is impossible in Model 1,
where trajectories were monotone in each variable within a given regular domain. Accordingly, trajectories could not leave a regular domain via the threshold they entered. This is no longer true in Model 2. 
		
		As a result of the lack of monotonicity, there can be solutions that {\it graze} a threshold, {\em i.e.,} intersect it tangentially (see Figure~\ref{fig:prop2}). If $\frac{\alpha_{i}}{\beta_{i}}<\frac{\gamma_{i}}{\kappa_{i}}\theta_{i,h_{i}+1}$ then there exists a solution trajectory on a curve $\Gamma_{u_{i}}^{(h_{i})}$ in $\mathcal{D}_{h}$ that grazes $\theta_{i,h_{i}+1}$, and if $\frac{\alpha_{i}}{\beta_{i}}>\frac{\gamma_{i}}{\kappa_{i}}\theta_{i,h_{i}}$ then there exists a solution trajectory on a curve $\Gamma_{l_{i}}^{(h_{i})}$ in $\mathcal{D}_{h}$ that grazes $\theta_{i,h_{i}}$. 
Their existence has interesting implications, including bounding a trapping region and non-uniqueness of solutions in the limit $q \rightarrow 0$.
		
		We construct these curves in the limit $q\rightarrow 0$. Then, $\Gamma_{l_{i}}^{(h_{i})}$, when it exists, is defined to be the unique solution trajectory in phase space that reaches $(\frac{\gamma_{i}}{\kappa_{i}}\theta_{i,h_{i}},\theta_{i,h_{i}})$. We can find an initial point $(x_{i}^{(0)},\theta_{i,h_{i}+1})$ on the threshold $y_{i}=\theta_{i,h_{i}+1}$, when it exists, as follows. Using one of the equations in (\ref{eqn;regxsol}) (depending on whether or not $\beta_{i}=\gamma_{i}$), set $x_{i}(t)=\frac{\gamma_{i}}{\kappa_{i}}\theta_{i,h_{i}}$, then solve for $t$. Insert this $t$ into the appropriate equation in (\ref{eqn;regysol}), then set $y_{i}(t)=\theta_{i,h_{i}}$ and $y_{i}^{(0)}=\theta_{i,h_{i}+1}$. Finally solve for $x_{i}^{(0)}$. If this cannot be done explicitly, it can be done numerically. If $x_{i}^{(0)}<0$ by this calculation, we still define $\Gamma_{l_{i}}^{(h_{i})}$ in this way, even though  $x_{i}^{(0)}<0$ is not physically meaninful. Let $t^{*}$ be the time it takes for $x_{i}(t;x_{i}^{(0)})$ to get to $\frac{\gamma_{i}}{\kappa_{i}}\theta_{i,h_{i}}$, i.e. $x_{i}(t^{*};x_{i}^{(0)})=\frac{\gamma_{i}}{\kappa_{i}}\theta_{i,h_{i}}$. $\Gamma_{l_{i}}^{(h_{i})}$ is then given parametrically by 
				\begin{equation*}
					\Bigg\{
					\begin{array}{lcl}
						x_{i}(t;x_{i}^{(0)}) \\
						y_{i}(t;\theta_{i,h_{i}+1})
					\end{array}
					\textup{for} \ \ 0\leq t \leq t^{*}.
				\end{equation*}
		The curve $\Gamma_{u_{i}}^{(h_{i})}$ is defined and found analogously, replacing $\theta_{i,h_{i}}$ with $\theta_{i,h_{i}+1}$ and vice versa. 
			
The existence of fixed points in regular domains depends on the existence of these curves. 

\begin{newthm}
In any regular domain, $\mathcal{D}_{h}$, there exists a unique fixed point at $(x^*,y^*)=\left(\frac{\alpha_i}{\beta_i},\frac{\kappa_i}{\gamma_i}\frac{\alpha_i}{\beta_i}\right)$ if and only if $\Gamma_{l_{i}}^{(h_{i})}$ and $\Gamma_{u_{i}}^{(h_{i})}$ exist for all $i=1,\ldots,n$, i.e. if $\frac{\gamma_i}{\kappa_i}\theta_{i,h_i}<\frac{\alpha_i}{\beta_i}<\frac{\gamma_i}{\kappa_i}\theta_{i,h_i+1}$ for all $i=1,\ldots,n$.
\label{fixedpoint}\end{newthm}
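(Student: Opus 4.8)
The plan is to exploit the fact that, within a regular domain, the system decouples into $n$ planar linear systems, so the fixed point is forced by linear algebra and the only genuine question is whether it sits inside the prescribed box. First I would note that in $\mathcal{D}_{h}$ the dynamics are governed by~(\ref{eqn;reg}), a linear system whose Jacobian was computed in the proof of Proposition~\ref{stableprop} and shown to have eigenvalues $-\beta_{i},-\gamma_{i}$, all nonzero. Hence this Jacobian is nonsingular, the equilibrium equations $\alpha_{i}-\beta_{i}x_{i}=0$, $\kappa_{i}x_{i}-\gamma_{i}y_{i}=0$ have a unique solution, and that solution is precisely the focal point $(x^{*},y^{*})$ with $x_{i}^{*}=\alpha_{i}/\beta_{i}$ and $y_{i}^{*}=\kappa_{i}\alpha_{i}/(\gamma_{i}\beta_{i})$. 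Consequently there is \emph{at most} one fixed point in $\mathcal{D}_{h}$, and if one exists it must equal $(x^{*},y^{*})$; uniqueness is therefore automatic, and the entire content of the theorem reduces to deciding when $(x^{*},y^{*})\in\mathcal{D}_{h}$.

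Next I would reduce membership in $\mathcal{D}_{h}$ to a condition on the $y$-coordinates alone. Because $\mathcal{D}_{h}=\mathbb{R}_{+}^{n}\times\prod_{i=1}^{n}(\theta_{i,h_{i}},\theta_{i,h_{i}+1})$, the $x$-coordinates are unconstrained beyond positivity; and Assumption~\ref{ass; ass1}, applied with $h_{i}=0$ and $\theta_{i,0}=0$, forces $y_{i}^{*}\neq 0$ and hence $\alpha_{i}>0$, so each $x_{i}^{*}=\alpha_{i}/\beta_{i}>0$ already lies in $\mathbb{R}_{+}$ with no further restriction. Thus $(x^{*},y^{*})\in\mathcal{D}_{h}$ if and only if $y_{i}^{*}\in(\theta_{i,h_{i}},\theta_{i,h_{i}+1})$ for every $i$. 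Assumption~\ref{ass; ass1} also guarantees $y_{i}^{*}$ never equals a threshold, so these are strict memberships and no boundary case arises.

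Finally I would translate the membership condition into the two curve-existence criteria. Multiplying the chain $\theta_{i,h_{i}}<\tfrac{\kappa_{i}}{\gamma_{i}}\tfrac{\alpha_{i}}{\beta_{i}}<\theta_{i,h_{i}+1}$ through by the positive factor $\gamma_{i}/\kappa_{i}$ yields $\tfrac{\gamma_{i}}{\kappa_{i}}\theta_{i,h_{i}}<\tfrac{\alpha_{i}}{\beta_{i}}<\tfrac{\gamma_{i}}{\kappa_{i}}\theta_{i,h_{i}+1}$. The left-hand inequality is exactly the condition stated earlier for the existence of the grazing curve $\Gamma_{l_{i}}^{(h_{i})}$, and the right-hand inequality is exactly the condition for $\Gamma_{u_{i}}^{(h_{i})}$. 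Hence $y_{i}^{*}\in(\theta_{i,h_{i}},\theta_{i,h_{i}+1})$ holds if and only if both $\Gamma_{l_{i}}^{(h_{i})}$ and $\Gamma_{u_{i}}^{(h_{i})}$ exist, and requiring this for all $i$ is equivalent to $(x^{*},y^{*})\in\mathcal{D}_{h}$, which by the first step is equivalent to the existence of a (necessarily unique) fixed point in $\mathcal{D}_{h}$. This closes the equivalence in both directions.

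As for the main obstacle, there is essentially no analytic difficulty: the result is a bookkeeping identity once the decoupled linear structure is exploited. The only point requiring care is the reduction in the second step --- confirming that the unbounded $x$-directions impose no constraint, and invoking Assumption~\ref{ass; ass1} both to force $\alpha_{i}>0$ and to guarantee strictness of the threshold inequalities --- together with the observation that the stated inequality is, term by term, precisely the pair of curve-existence conditions introduced just before the theorem.
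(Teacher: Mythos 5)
Your proposal is correct and takes essentially the same approach as the paper, whose one-line proof simply observes that for each $i$ the $x_i$-nullcline $x_i=\frac{\alpha_i}{\beta_i}$ and the $y_i$-nullcline $y_i=\frac{\kappa_i}{\gamma_i}x_i$ intersect inside $\mathcal{D}_{h}$ precisely under the stated inequalities --- the same decoupled nullcline-intersection argument you give. Your additional bookkeeping (uniqueness via the nonsingular linear system, positivity of $x_i^*$, and strictness via Assumption~\ref{ass; ass1}) merely makes explicit what the paper leaves implicit.
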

\begin{proof}This is simply a consequence of the fact that for each $i$, the $x_i$ null cline ($x_i=\frac{\alpha_i}{\beta_i}$) and the $y_i$ null cline ($y_i=\frac{\kappa_i}{\gamma_i}x_i$) intersect in $\mathcal{D}_{h}$.
\end{proof}
			
Regions bounded by these curves form trapping regions. To define such regions, the cases where a $y_i$ variable is below $\theta_{i,1}$ or above $\theta_{i,p_i}$ must be handled separately. 

For each regular domain, $\mathcal{D}_{h}$, we define the $2n$-dimensional region:
\begin{equation*} 
    \mathcal{R}_{c}^{(h)}:=\displaystyle{\prod_{i=1}^{n}\mathcal{R}_{c_{i}}^{(h_{i})}},
\end{equation*}
where each $\mathcal{R}_{c_{i}}^{(h_{i})}$ is a two-dimensional region defined as follows:

				\textbf{Case 1:} If $0<h_i<p_i$, so that both $\Gamma_{u_{i}}^{(h_{i})}$ and $\Gamma_{l_{i}}^{(h_{i})}$ exist, 
					\begin{equation*}
						\mathcal{R}_{c_{i}}^{(h_{i})}:=\{(x_{i},y_{i})\in \mathbb{R}^{2}_{+}|\theta_{i,h_{i}}<y_{i}<\theta_{i,h_{i}+1},\ \max\{\Gamma_{l_{i}}^{(h_{i})},0\}<x_{i}<\Gamma_{u_{i}}^{(h_{i})} \}.
					\end{equation*}
					
				\textbf{Case 2:} If $h_i=p_i$, so that only $\Gamma_{l_{i}}^{(h_{i})}$ exists, 
					\begin{equation*}
						\mathcal{R}_{c_{i}}^{(h_{i})}:=\{(x_{i},y_{i})\in \mathbb{R}^{2}_{+}|\theta_{i,p_{i}}<y_{i},\ \Gamma_{l_{i}}^{(h_{i})}<x_{i} \}.
					\end{equation*}
					
				\textbf{Case 3:} If $h_i=0$, so that only $\Gamma_{u_{i}}^{(h_{i})}$ exists, 
 					\begin{equation*}
						\mathcal{R}_{c_{i}}^{(h_{i})}:=\{(x_{i},y_{i})\in \mathbb{R}^{2}_{+}|0<y_{i}<\theta_{i,1},\ 0<x_{i}<\Gamma_{u_{i}}^{(h_{i})} \}.
					\end{equation*} 

Note that it is allowed for the $x$-value of $\Gamma_{l_{i}}^{(h_{i})}$ to go below $0$, and one can find the intersection of $\Gamma_{l_{i}}^{(h_{i})}$ with the $y$-axis to describe the appropriate region in a similar way to the calculation of $x_i^{(0)}$ on the threshold $y_i=\theta_{i,h_i+1}$ above.

			\begin{newthm}
				Let $q \rightarrow 0$. Suppose there exists a fixed point in a regular domain $\mathcal{D}_{h}$. Then, for each $i$, if a trajectory enters $\mathcal{D}_{h}$ at $(x^{(0)},y^{(0)}) \in \mathcal{R}_{c}^{(h)}$
 then no other switching occurs and it converges to the fixed point. Conversely, $\mathcal{D}_{h}\setminus\mathcal{R}_{c}^{(h)}\neq\emptyset$ and if there exists at least one pair $(x_{i}^{(0)},y_{i}^{(0)})$ such that the trajectory enters $\mathcal{D}_{h}$ at $(x^{(0)},y^{(0)})$ with $(x_{i}^{(0)},y_{i}^{(0)})\in \mathcal{D}_{h_{i}}\setminus\mathcal{R}_{c_{i}}^{(h_{i})}$, then the trajectory will leave $\mathcal{D}_{h}$.
			\label{fixpointprop}
			\end{newthm}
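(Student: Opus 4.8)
The plan is to exploit the fact that inside the regular domain the system (\ref{eqn;reg}) decouples completely into $n$ independent planar linear systems, one per pair $(x_i,y_i)$. Consequently the $2n$-dimensional region $\mathcal{R}_{c}^{(h)}=\prod_i\mathcal{R}_{c_i}^{(h_i)}$ is a product, a trajectory leaves $\mathcal{D}_h$ precisely when some component $y_i$ reaches one of its thresholds $\theta_{i,h_i},\theta_{i,h_i+1}$, and the whole trajectory converges to $\Phi$ precisely when every component converges to $(x_i^*,y_i^*)$. By Proposition~\ref{fixedpoint} the hypothesis that a fixed point lies in $\mathcal{D}_h$ guarantees that both grazing curves exist for every $i$, so each planar region is genuinely of Case~1 type. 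The whole statement thus reduces to a claim about a single planar system, which I would analyze in the $(x_i,y_i)$-plane and then recombine over $i$.

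For the forward implication I would first record that, since (\ref{eqn;reg}) is linear with the stable node $(x_i^*,y_i^*)$ of Proposition~\ref{stableprop} as its unique equilibrium, the explicit solutions (\ref{eqn;regxsol})--(\ref{eqn;regysol}) show that any trajectory remaining in the strip $\theta_{i,h_i}<y_i<\theta_{i,h_i+1}$ automatically converges to $(x_i^*,y_i^*)$. It therefore suffices to prove that $\mathcal{R}_{c_i}^{(h_i)}$ is forward invariant and contained in the open strip, whence an orbit starting inside never reaches a threshold (no further switching) and convergence is free. Forward invariance I would get by checking the field on each piece of $\partial\mathcal{R}_{c_i}^{(h_i)}$: on the arcs $\Gamma_{l_i}^{(h_i)}$ and $\Gamma_{u_i}^{(h_i)}$ the field is tangent, since these are themselves trajectories and uniqueness forbids crossing; on the bounding segment of the upper threshold one has $x_i\le\frac{\gamma_i}{\kappa_i}\theta_{i,h_i+1}$, so $\dot y_i=\kappa_i x_i-\gamma_i\theta_{i,h_i+1}\le 0$ points inward, and symmetrically $\dot y_i\ge 0$ on the lower-threshold segment; and on any part of the boundary lying on $x_i=0$ one has $\dot x_i=\alpha_i>0$ (positivity of $\alpha_i$ follows from $y_i^*\in(\theta_{i,h_i},\theta_{i,h_i+1})$). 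The only boundary contacts with a threshold are the two tangential grazing points, so no transversal crossing can occur.

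For the converse, $\mathcal{D}_{h_i}\setminus\mathcal{R}_{c_i}^{(h_i)}\neq\emptyset$ is immediate, as points with $x_i>\Gamma_{u_i}^{(h_i)}(y_i)$ lie in the strip but outside the region. The core is a separation/non-crossing principle. I would first verify from the sign of $\dot x_i$ that $\Gamma_{u_i}^{(h_i)}$ and $\Gamma_{l_i}^{(h_i)}$ are monotone graphs over $y_i\in[\theta_{i,h_i},\theta_{i,h_i+1}]$, with $x_i>x_i^*$ throughout $\Gamma_{u_i}^{(h_i)}$ and $x_i<x_i^*$ throughout $\Gamma_{l_i}^{(h_i)}$, so each curve cuts the strip into a component containing the focal point and a component that does not. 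If the $i$-th component starts with $x_i^{(0)}\ge\Gamma_{u_i}^{(h_i)}(y_i^{(0)})$, then were it to stay in the strip forever it would, by linearity, converge to $(x_i^*,y_i^*)$, which lies in the other component; since orbits cannot cross the trajectory $\Gamma_{u_i}^{(h_i)}$ and cannot exit through $x_i=0$ (where $\dot x_i=\alpha_i>0$), this is impossible, and the component must reach a threshold. The case $x_i^{(0)}\le\Gamma_{l_i}^{(h_i)}(y_i^{(0)})$ with $\Gamma_{l_i}^{(h_i)}(y_i^{(0)})>0$ is symmetric, with $\Gamma_{l_i}^{(h_i)}$ separating. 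In either case the offending component reaches a threshold, so the full trajectory leaves $\mathcal{D}_h$.

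The main obstacle I anticipate is the converse: making the separation argument airtight while the left boundary of $\mathcal{R}_{c_i}^{(h_i)}$ switches between $\Gamma_{l_i}^{(h_i)}$ and the axis $x_i=0$. I must confirm that on the $y_i$-levels where $\Gamma_{l_i}^{(h_i)}$ has dropped below zero there are simply no exterior points with $x_i>0$ to its left, so that every genuinely exterior point is cut off from $\Phi$ by one of the two grazing trajectories. A secondary delicacy is the boundary locus itself, a start lying exactly on $\Gamma_{u_i}^{(h_i)}$ or $\Gamma_{l_i}^{(h_i)}$: there the orbit meets a threshold tangentially rather than transversally, which is precisely the grazing behaviour underlying the non-uniqueness of Section~\ref{subsec:nonunique}; I would note that reaching the threshold, even tangentially, still counts as leaving $\mathcal{D}_h$.
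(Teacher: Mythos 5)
Your proposal is correct and follows the same skeleton as the paper's proof: decouple (\ref{eqn;reg}) into $n$ independent planar systems, treat the grazing trajectories $\Gamma_{u_i}^{(h_i)}$, $\Gamma_{l_i}^{(h_i)}$ as barriers that no other orbit can cross (uniqueness), deduce forward invariance of $\mathcal{R}_{c_i}^{(h_i)}$, and force a threshold crossing for components starting outside. Where you differ is in two steps, both to your advantage. First, for convergence inside the invariant region the paper invokes Proposition~\ref{stableprop} together with the Poincar\'e--Bendixson theorem, whereas you observe that in a regular domain the flow is linear with a globally attracting node, so any orbit that never switches converges automatically by the explicit solutions (\ref{eqn;regxsol})--(\ref{eqn;regysol}); this is more elementary and quietly fills a gap the paper leaves implicit (Poincar\'e--Bendixson also requires excluding closed orbits, which linearity gives for free). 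Second, the paper's converse is a one-sentence appeal to ``uniqueness and the construction'' of the grazing curves; your separation argument --- the curves are monotone graphs lying on one side of the nullcline $x_i=\frac{\gamma_i}{\kappa_i}y_i$, so on the exterior side $\dot y_i$ has a fixed sign, the orbit cannot cross the curve, cannot exit through $x_i=0$ (where $\dot x_i=\alpha_i>0$), and would otherwise have to converge to a focal point in the other component --- is the airtight version of the same idea, and your explicit treatment of starts exactly on a grazing curve (reaching the threshold tangentially, consistent with the non-uniqueness of Section~\ref{subsec:nonunique}) addresses an edge case the paper's phrase ``must cross a threshold'' glosses over. One small correction: Proposition~\ref{fixedpoint} does not reduce everything to Case~1; Cases~2 and~3 of the definition of $\mathcal{R}_{c_i}^{(h_i)}$ are triggered by $h_i=p_i$ or $h_i=0$ regardless of the fixed point's existence, since $\theta_{i,0}=0$ and $\theta_{i,p_i+1}=y_{i,\max}$ are not genuine switching thresholds. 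These one-sided regions only simplify your argument (a single curve and a single real threshold), so the slip is harmless, but the case split should be stated as the paper defines it.
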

			\begin{proof}
				Let $\mathcal{D}_{h}$ be a regular domain containing a fixed point
					\begin{equation*}
						(x^{*},y^{*})=(x^{*}_{1},\dots,x^{*}_{n},y^{*}_{1},\dots,y^{*}_{n}).
					\end{equation*}
				In $\mathcal{D}_{h}$ the flow is given by (\ref{eqn;regxsol}) and (\ref{eqn;regysol}), and we note that we can solve both equations in (\ref{eqn;regxsol}) for $t$ as a function of $x_{i}$ without difficulty.
								
				Since each pair $(x_{i},y_{i})$ is independent of $(x_{j},y_{j})$ for all $i \neq j$ in a regular domain, we can consider each pair by itself. Suppose a trajectory $\Pi$ enters a regular domain at a point $(x^{(0)},y^{(0)}) \in \mathcal{R}_{c}^{(h)}$. Then $(x_{i},y_{i})\in \mathcal{R}_{c_{i}}^{(h_{i})}$ for each $i=1,\dots,n$. By uniqueness, the direction of the flow across the boundaries, and the construction of $\Gamma_{u_{i}}^{(h_{i})}$ (resp. $\Gamma_{l_{i}}^{(h_{i})}$),  $\mathcal{R}_{c_{i}}^{(h_{i})}$ is an invariant region, so, by Proposition 1 and the Poincar\'e-Bendixson Theorem, $(x_{i},y_{i})$ converges to $(x_{i}^{*},y_{i}^{*})$. Since this is true for each $i$, we conclude that $\Pi$ converges to $(x^{*},y^{*})$.
							
				Conversely, since there is no {\em a priori} maximum value for any $x_{i}$, and $\kappa_{i}\neq 0$, $\gamma_{i}\neq 0$, it follows immediately that $\mathcal{D}_{h}\setminus\mathcal{R}_{c}^{(h)}\neq\emptyset$. Suppose that $(x_{i},y_{i})\notin \mathcal{R}_{c_{i}}^{(h_{i})}$ for some $i$. Then, again by uniqueness and the construction of $\Gamma_{u_{i}}^{(h_{i})}$ (resp. $\Gamma_{l_{i}}^{(h_{i})}$), one of these $y_{i}$ must cross a threshold, at which point $(x^{*},y^{*})$ ceases to be the relevant focal point and $\Pi$ flows towards a new focal point. 			
			\end{proof}
				\begin{figure}[!htb]
					\hspace*{-1.53cm}
					\includegraphics[scale=.4]{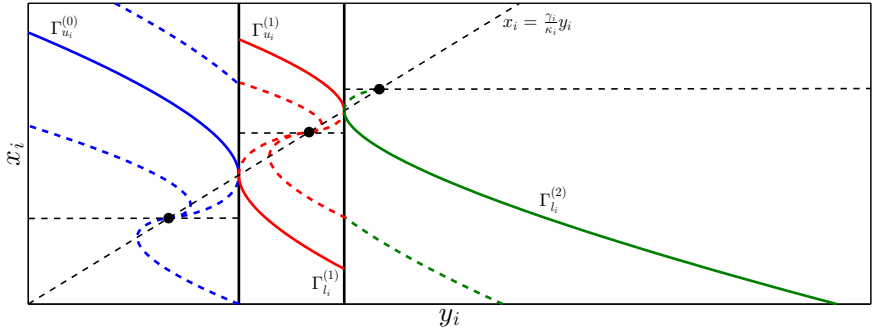}
					\caption{Phase plane for a single pair $(x_i,y_i)$. The solid vertical lines represent threshold values $\theta_{i,h_{i}}$ and $\theta_{i,h_{i}+1}$, respectively. The cases 1, 2 and 3 correspond to the region between the two thresholds, to the right of the right threshold, and to the left of the left threshold, respectively. In this example, we take the $x_{i}$ nullcline, $x_{i}=\frac{F_{i}(Z)}{\beta_{i}}$ in each region to be in $\mathcal{R}_{c_{i}}^{(h_{i})}$. The grazing trajectories, curves $\Gamma_{l_{i}}^{(h_{i})}$ and $\Gamma_{u_{i}}^{(h_{i})}$, are in bold, while other trajectories are shown as dotted curves. Null-clines are shown as dotted straight lines, and fixed points as solid circles.}
				\label{fig:prop2}
				\end{figure} 
					
		By analysis of the flow, we can conclude the existence of a sepratrix in each regular domain that corresponds to $\Gamma_{u_{i}}^{(h_{i})}$ or $\Gamma_{l_{i}}^{(h_{i})}$ in the case of $q>0$, but they are more difficult to calculate. However, if certain conditions are met (see section \ref{subsec:nonunique}) then these curves correspond to the stable manifolds of fixed points near the thresholds. 
		
		This result describes a loss of global stability with respect to the regular domains when we move from the Model 1 to Model 2, illustrated in Example~\ref{prop2ex}, below.
		
			\begin{newrem}
				Refer to Figure~\ref{fig:prop2}. Recall that the existence of $\Gamma_{l_{i}}^{(h_{i})}$ or $\Gamma_{u_{i}}^{(h_{i})}$ did not rely on the existence of a fixed point. Also, as a corollary of Proposition \ref{fixpointprop}, if $h_{i}<p_{i}$ and $\frac{\alpha_{i}}{\beta_{i}}< \frac{\gamma_{i}}{\kappa_{i}}\theta_{i,h_{i}}$ for some $\theta_{i,h_{i}}$, then there is an interval $\mathcal{R}_{i,h_{i}}$ of the threshold line $\theta_{i,h_{i}}$ in $\mathbb{R}_{+}^{2}$ such that if a trajectory $\Pi$ enters $\mathcal{D}_{h}$ on the threshold $\theta_{i,h_{i}}$ at $(\bar{x},\bar{y})$ with $\bar{x}_i \in \mathcal{R}_{i,h_{i}}$, then the next threshold $\Pi$ crosses cannot be $\theta_{i,h_{i}+1}$. This interval is given by
					\begin{equation*}
						\mathcal{R}_{i,h_{i}}=\bigg\{(x_{i},y_{i})\in\mathbb{R}^{2}_{+}\ \bigg|\ \frac{\gamma_{i}}{\kappa_{i}}\theta_{i,h_{i}}<x_{i}<\Gamma_{u_{i}}^{(h_{i})}, y_{i}=\theta_{i,h_{i}}\bigg \}.
					\end{equation*}	
				 Similarly, if $\frac{\alpha_{i}}{\beta_{i}}> \frac{\gamma_{i}}{\kappa_{i}}\theta_{i,h_{i}+1} $, there is an interval $\mathcal{R}_{i,h_{i}+1}$ of the threshold line $\theta_{i,h_{i}+1}$ such that if a trajectory $\Pi$ enters $\mathcal{D}_{h}$ on the threshold $\theta_{i,h_{i}+1}$ at $(\bar{x},\bar{y})$ with $\bar{x}_i \in \mathcal{R}_{i,h_{i}}$, then the next threshold $\Pi$ crosses will not be $\theta_{i,h_{i}}$. This region is given by
				 \begin{equation*}
					\mathcal{R}_{i,h_{i}+1}=\{(x_{i},y_{i})\in\mathbb{R}^{2}_{+}\ |\ \Gamma_{l_{i}}^{(h_{i})}<x_{i}<\frac{\gamma_{i}}{\kappa_{i}}\theta_{i,h_{i}+1}, y_{i}=\theta_{i,h_{i}} \}.
				 \end{equation*} 
			\end{newrem}
			
			\begin{exmp}
				Consider the 2-gene network in the framework of Model 1 
					\begin{equation}
						\begin{array}{ll}
							\dot{y}_{1}=Z_{11}+\frac{3}{2}(Z_{21}-Z_{22})-\frac{7}{8}x_{1} \\
							\dot{y}_{2}=Z_{11}+\frac{3}{2}Z_{12} - \frac{7}{8}x_{2} \\
						\end{array}
					\label{eqn;instbl}
					\end{equation}
with $\theta_{11}=1$, $\theta_{12}=2$, $\theta_{21}=\frac{1}{2}$, and $\theta_{22}=1$. We note that $(0,0)$ is a fixed point and any solution entering the box $\mathcal{B}_{00}:=[0,1]\times[0,\frac{1}{2}]$ will converge to it asymptotically. 
				
				We now expand the system to include both mRNA and protein as follows,
					\begin{equation}
						\begin{array}{l}
							\dot{x}_{1}=Z_{11}+\frac{3}{2}(Z_{21}-Z_{22})-\frac{7}{8}x_{1} \\
							\dot{y}_{1}=x_{1}-y_{1} \\
							\dot{x}_{2}=Z_{11}+\frac{3}{2}Z_{12} - \frac{7}{8}x_{2} \\
							\dot{y}_{2}=x_{2}-2y_{2},
						\end{array}
					\label{eqn;instblex}
					\end{equation}
where each $\theta_{ij}$ is as before and take $q\rightarrow0$. The box $\mathcal{B}_{00}$ for~(\ref{eqn;instbl}) corresponds to the regular domain $\mathcal{D}_{00}=\mathbb{R}_{+}^2 \times \mathcal{B}_{00}$ for~(\ref{eqn;instblex}), and the fixed point $(0,0,0,0)$ for~(\ref{eqn;instblex}) corresponds to $(0,0)$ for~(\ref{eqn;instbl}). In $\mathcal{D}_{00}$ with initial conditions $x_{1}(0)=x_{1}^{(0)}$, $y_{1}(0)=y_{1}^{(0)}$, $x_{2}(0)=x_{2}^{(0)}$, and $y_{2}(0)=y_{2}^{(0)}$, (\ref{eqn;instblex}) has solutions
					\begin{equation*}
						\begin{array}{ll}
							x_{1}(t)=x_{1}^{(0)}e^{-\frac{7}{8}t} \\
							y_{1}(t)=8x_{1}^{(0)}e^{-\frac{7}{8}t}+(y_{1}^{(0)}-8x_{1}^{(0)})e^{-t} \\
							x_{2}(t)=x_{2}^{(0)}e^{-\frac{7}{8}t} \\
							y_{2}(t)=\frac{8}{9}x_{2}^{(0)}e^{-\frac{7}{8}t}+(y_{1}^{(0)}-\frac{8}{9}x_{1}^{(0)})e^{-2t}.
						\end{array}
					\end{equation*}
				After going through the process described in Section \ref{subsec:fixedpoints}, we find that $\Gamma_{u_{1}}^{(0)}$ is given parametrically by 
					\begin{equation*}
						\Bigg\{ 
						\begin{array}{l}
							x_{1}(t)=\left(\frac{8}{7}\right)^{7}e^{-\frac{7}{8}t} \\
							y_{1}(t)=8\left(\frac{8}{7}\right)^{7}(e^{-\frac{7}{8}t}-e^{-t})
						\end{array},\ \ 
						0 \leq t \leq 8\ln\left(\frac{8}{7}\right).
					\end{equation*}
				and that $\Gamma_{u_{2}}^{(0)}$ is given parametrically by 
					\begin{equation*}
						\Bigg\{ 
						\begin{array}{l}
							x_{2}(t)=\left(\frac{16}{7}\right)^{7/9}e^{-\frac{7}{8}t} \\
							y_{1}(t)=\frac{8}{9}\left(\frac{16}{7}\right)^{7/9}(e^{-\frac{7}{8}t}-e^{-2t})
						\end{array},\ \ 
						0 \leq t \leq \frac{8}{9}\ln\left(\frac{16}{7}\right).
					\end{equation*}
				By definition, $\mathcal{R}^{(00)}_{c}=\mathcal{R}^{(0)}_{c_{1}}\times\mathcal{R}^{(0)}_{c_{2}}$. The regions $\mathcal{R}^{(0)}_{c_{1}}$ and $\mathcal{R}^{(0)}_{c_{2}}$ are shown in Figure~\ref{fig:convergencedom}. 
				
				A loss of global stability with respect to the regular domain is seen when we compare a point in $\mathcal{B}_{00}$ and a corresponding point in $\mathcal{D}_{00}$. For instance, compare the points $P=(\frac{1}{2},\frac{1}{4})$ and $Q=(3,\frac{1}{2},1,\frac{1}{4})$. Since $P\in \mathcal{B}_{00}$, the trajectory starting at $P$ converges to $(0,0)$. However, the trajectory starting from the corresponding point $Q$ does not converge to $(0,0,0,0)$ by Proposition~\ref{fixpointprop} since $Q \notin \mathcal{R}^{00}_{c}$. Thus, $(0,0,0,0)$ is no longer globally asymptotically stable with respect to $\mathcal{D}_{00}$, although $(0,0,0,0)$ is still locally asymptotically stable by Proposition~\ref{stableprop}. 
					\begin{figure}[!htb]
				\hspace*{-1.3cm}
					\includegraphics[scale=.37]{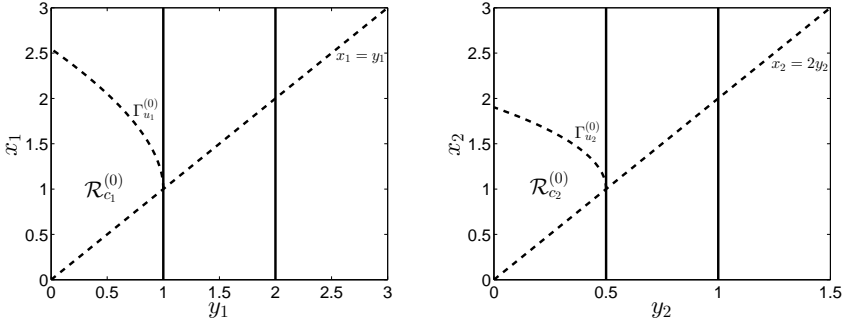}
					\caption{Phase plane of Equations~(\ref{eqn;instblex}) showing $\mathcal{R}^{(0)}_{c_{1}}$ and $\mathcal{R}^{(0)}_{c_{2}}$.}
				\label{fig:convergencedom}
				\end{figure} 
			\label{prop2ex} \qed
			\end{exmp}			
					
		\subsection{Non-Uniqueness}\label{subsec:nonunique}
		
		We motivate the work in this section by the following example.
		\begin{exmp}
			Consider the 1-gene network
				\begin{equation}
					\begin{array}{l}
						\dot{x}=Z- x \\
						\dot{y}=3 x - y
					\end{array}
				\label{eqn;nonuniquesys}
				\end{equation}
				with $\theta=2$. There are fixed points $(0,0)$ and $(1,3)$ in the regular domains. It is a simple application of the singular perturbation theory to show that there is in addition a fixed point near $(x,y)=(\tfrac{2}{3},2)$ for $q>0$ sufficiently small. In the limit $q\to 0$, this lies on the threshold. Taking $q \rightarrow 0$, the solutions in $\mathcal{D}_{1}$ with initial conditions $x(0)=x^{(0)}$ and $y(0)=y^{(0)}$ are given by
				\begin{equation*}
					\begin{array}{l}
						x(t)=1+(x^{(0)}-1)e^{-t} \\
						y(t)=3+(3x^{(0)}-3)te^{-t}+(y^{(0)}-3)e^{-t}\,.
					\end{array}
				\end{equation*}
				For a solution starting at $(0,3\ln(3))$ we notice that at time $t=\ln(3)$ 
				\begin{equation*}
					\begin{array}{l}
						x(\ln(3))=1-e^{-\ln(3)}=\frac{2}{3} \\
						y(\ln(3))=3-3\ln(3)e^{-\ln(3)}+(3\ln(3)-3)e^{-\ln(3)}=2\,,
					\end{array}
				\end{equation*}
				which is the location of the fixed point. Thus, for any solution starting on the stable manifold in $\mathcal{D}_{1}$, it takes at most $\ln(3)$ time units to reach the fixed point. 
						
				Similarly, in $\mathcal{D}_{0}$, solutions with initial conditions $x(0)=x^{(0)}$ and $y(0)=y^{(0)}$ are given by 
				\begin{equation*}
					\begin{array}{l}
						x(t)=x^{(0)}e^{-t} \\
						y(t)= 3x^{(0)}te^{-t}+y^{(0)}e^{-t}\,.
					\end{array}
				\end{equation*} 
				For a solution starting at $(\frac{2}{3}e,0)$ we notice that at time $t=1$
				\begin{equation*}
					\begin{array}{l}
						x(1)=\frac{2}{3}e\cdot e^{-1}=\frac{2}{3} \\
						y(1)= 3\frac{2}{3}e\cdot e^{-1}=2,
					\end{array}
				\end{equation*}
				which is also the location of the fixed point. It is clear that uniqueness is now lost and as a result what happens to these flows after they intersect $(\frac{2}{3},2)$ is ambiguous. Either they follow either of the unstable manifolds, or will remain at the fixed point. \qed
					\begin{figure}[!htb]
						\hspace*{-1.5cm}
						\includegraphics[scale=.4]{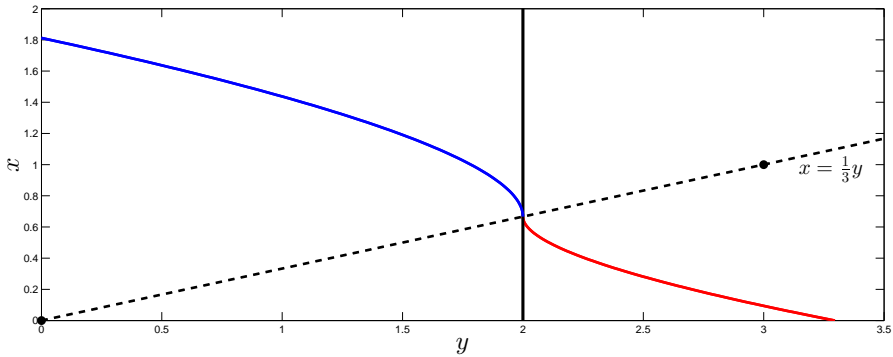}
						\caption{The non-unique solutions of (\ref{eqn;nonuniquesys}), shown in red and blue.}
					\label{fig:nonunique}
					\end{figure}
				\end{exmp}
				
		It is natural to ask when this happens. That is, under what conditions is there a non-unique solution in the limit $q\rightarrow 0$? In this section we give an answer to this question. We proceed with the following assumption:
			\begin{newass}
				At least one $F_{i}$ depends on $Z_{ij}$ for at least one $j$.
			\end{newass}
				
		Assumption 2 says that there is autoregulation. For simplicity, we take $i=1$ in what follows, but the conclusion holds for any $i$. Suppose that we are in a regular domain $\mathcal{D}_{h}$ such that $y_{1}$ is the next variable to hit a threshold (this is not unreasonable, as it could happen, for instance, that each other pair $(x_{j},y_{j})$ for $i\neq 1$ has focal point coordinates in the regular domain $\mathcal{D}_{h}$). This allows us to disregard all variables with $i>1$, as each $Z_{ij}$ will be constant (0 or 1) in the limit $q\to 0$ (for $q>0$ sufficiently small, it will be almost constant, but the result will take more work!). For simplicity again, we assume that $y_{1}$ has only one threshold, but this does not change the conclusion of the result as we are only concerned with the dynamics close to one threshold. 
				
		Since we take each $F_{i}$ to be a multilinear polynomial, close to a threshold $\theta_{1,1}$, we can write (\ref{def;Model2}) as
			\begin{equation}
				\begin{array}{l}
					\dot{x}_{1}=b+aZ_{11} -\beta_{1}x_{1} \\
					\dot{y}_{1}=\kappa_{1}x_{1}-\gamma_1 y_{1} \\
					\dot{x}_{i}=F_{i}(Z)-\beta_{i}x_{i} \\
					\dot{y}_{i}=\kappa_{i}x_{i}-\gamma_i y_{i}
				\end{array} i=2,\dots, n
			\label{eqn;nonunique}
			\end{equation}
where $b\geq 0$ and $a\geq -b$ are constants. For this section, suppose that $a\geq 0$. If $-b\leq a< 0$, then it can easily be shown that the dynamics we wish to explore do not take place. 
				
		In what follows we drop the subscripts for all variables and parameters, as it is understood that they are all 1.
				
			\begin{newthm}
				If $a < \frac{\beta\gamma}{\kappa}\theta< a+b$, then there is a fixed point for the pair $(x,y)$ in a neighbourhood of $(\frac{\gamma}{\kappa}\theta,\theta)$. Moreover, this point converges to $(\frac{\gamma}{\kappa}\theta,\theta)$ as $q\rightarrow 0$. If $q$ is sufficiently small, then this fixed point is a saddle point.
			\end{newthm}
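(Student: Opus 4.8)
The plan is to reduce the fixed-point problem for the pair $(x,y)$ to a single scalar equation and attack existence with the intermediate value theorem, and then to read off the saddle structure from the $2\times 2$ Jacobian. Setting $\dot y=0$ gives $x=\tfrac{\gamma}{\kappa}y$; substituting into $\dot x=0$ yields the scalar root-finding problem $G_q(y):=\tfrac{\beta\gamma}{\kappa}y-b-aH(y,\theta,q)=0$, whose roots $y^*_q$ (together with $x^*_q=\tfrac{\gamma}{\kappa}y^*_q$) are exactly the fixed points of the pair. I would look for a root of $G_q$ near $y=\theta$.

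For existence and convergence, I would first examine the pointwise limit of $G_q$ as $q\to 0$. For $y<\theta$ we have $H\to 0$, so $G_q(y)\to\tfrac{\beta\gamma}{\kappa}y-b$, and for $y>\theta$ we have $H\to 1$, so $G_q(y)\to\tfrac{\beta\gamma}{\kappa}y-(a+b)$. The hypothesis places $\tfrac{\beta\gamma}{\kappa}\theta$ strictly between the two production levels $b$ and $a+b$ (i.e. $b<\tfrac{\beta\gamma}{\kappa}\theta<a+b$), so the first limit is positive just below $\theta$ while the second is negative just above $\theta$. Hence, fixing a small $\varepsilon>0$, for all sufficiently small $q$ one has $G_q(\theta-\varepsilon)>0>G_q(\theta+\varepsilon)$, and the intermediate value theorem produces a root $y^*_q\in(\theta-\varepsilon,\theta+\varepsilon)$. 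Letting $\varepsilon\to 0$ as $q\to 0$ forces $y^*_q\to\theta$ and $x^*_q\to\tfrac{\gamma}{\kappa}\theta$. Uniqueness of this root in a neighbourhood of $\theta$ follows from $G_q'(y)=\tfrac{\beta\gamma}{\kappa}-aH'(y,\theta,q)<0$ there, since $H'$ is large near the threshold; thus $G_q$ is strictly decreasing across $\theta$ and the crossing is simple.

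For the saddle claim, I would compute the Jacobian of the $(x,y)$ subsystem at the fixed point,
\[
J=\begin{pmatrix}-\beta & a\,H'(y^*_q,\theta,q)\\ \kappa & -\gamma\end{pmatrix},
\]
with $\operatorname{tr}J=-(\beta+\gamma)<0$ and $\det J=\beta\gamma-\kappa a\,H'(y^*_q,\theta,q)$. The point is a saddle precisely when $\det J<0$, i.e. when $a\,H'(y^*_q,\theta,q)>\tfrac{\beta\gamma}{\kappa}$. Evaluating $G_q(y^*_q)=0$ shows $H(y^*_q,\theta,q)=\tfrac1a\big(\tfrac{\beta\gamma}{\kappa}y^*_q-b\big)\to c:=\tfrac1a\big(\tfrac{\beta\gamma}{\kappa}\theta-b\big)\in(0,1)$, which pins $y^*_q$ inside the $O(q)$-wide switching layer about $\theta$. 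There a direct differentiation of the Hill function gives the exact identity $H'(y^*_q,\theta,q)=\tfrac{H(y^*_q)(1-H(y^*_q))}{q\,y^*_q}$, and since $H(y^*_q)\to c$ and $y^*_q\to\theta$ this is asymptotic to $\tfrac{c(1-c)}{q\,\theta}\to\infty$ as $q\to 0$. Hence $\det J\to-\infty$ and $J$ has one positive and one negative eigenvalue for all sufficiently small $q$, so the fixed point is a saddle.

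The main obstacle is this last estimate: the intermediate value theorem alone only locates $y^*_q$ near $\theta$ on the macroscopic scale, whereas $H'$ blows up only within the $O(q)$ boundary layer. The crux is therefore to show that $y^*_q$ actually lands inside that layer --- equivalently, that $H(y^*_q,\theta,q)$ stays bounded away from both $0$ and $1$ --- which is exactly what the two-sided hypothesis on $\tfrac{\beta\gamma}{\kappa}\theta$ secures through $c\in(0,1)$, and which also forces $a>0$ so that the off-diagonal feedback entry $a\,H'$ does not vanish. Once $H'(y^*_q,\theta,q)\to\infty$ is established, the sign of $\det J$ and hence the saddle conclusion are immediate.
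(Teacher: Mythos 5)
Your proof is correct and follows the same overall skeleton as the paper's: reduce to a scalar fixed-point equation, apply the intermediate value theorem, then show the feedback entry of the Jacobian blows up as $q\to 0$ so that $\det J<0$ and the point is a saddle. Your execution differs in two details, both to your advantage. First, the paper applies the IVT in the $x$-variable to $p(x)=\left(\frac{\kappa x}{\gamma\theta}\right)^{1/q}-\frac{\beta x-b}{(a+b)-\beta x}$ on the macroscopic interval $\left(\frac{b}{\beta},\frac{a+b}{\beta}\right)$; for $q>0$ that interval also contains the two ``regular'' fixed points near $\frac{b}{\beta}$ and $\frac{a+b}{\beta}$, so the subsequent claim that $\bar{y}=\bigl(\frac{\beta\bar{x}-b}{(a+b)-\beta\bar{x}}\bigr)^{q}\theta\to\theta$ is only valid for the layer root, which the paper does not explicitly isolate. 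Your shrinking-window IVT for $G_q$ at $\theta\pm\varepsilon$ pins the root inside the switching layer from the outset and makes the convergence $(x^*_q,y^*_q)\to\left(\frac{\gamma}{\kappa}\theta,\theta\right)$ immediate. Second, where the paper substitutes the fixed-point relation $\bar{y}^{1/q}=\frac{\beta\bar{x}-b}{(a+b)-\beta\bar{x}}\,\theta^{1/q}$ into the eigenvalue formula to force the divergence, you use the exact identity $H'=H(1-H)/(qy)$ together with $H(y^*_q)\to c\in(0,1)$; these are equivalent computations, but your formulation makes transparent that the entire saddle argument rests on the fixed point sitting where $H$ is bounded away from $0$ and $1$.

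Two small caveats. You read the hypothesis as $b<\frac{\beta\gamma}{\kappa}\theta<a+b$, whereas the statement says $a<\frac{\beta\gamma}{\kappa}\theta<a+b$; your reading is the right one, since the paper's own limit computation needs $\beta\bar{x}-b\to\frac{\beta\gamma}{\kappa}\theta-b>0$, so the lower bound ``$a$'' is evidently a slip for ``$b$'' --- your $c\in(0,1)$ encodes exactly the needed condition. Also, your justification of local uniqueness (``$G_q'(y)<0$ there, since $H'$ is large near the threshold'') is not quite right: $H'$ is large only inside the $O(q)$ layer, while $G_q'\approx\frac{\beta\gamma}{\kappa}>0$ in the rest of any fixed neighbourhood of $\theta$, so $G_q$ is not monotone on $(\theta-\varepsilon,\theta+\varepsilon)$. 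Uniqueness nevertheless holds because $G_q$ is sign-definite outside the layer (positive below $\theta$, negative above, by the same limits you computed); and in any case the proposition claims only existence, so nothing essential is affected.
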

			\begin{proof}
				We need to show that there exists a point $(\bar{x},\bar{y})$ close to $\left(\frac{\gamma}{\kappa}\theta,\theta\right)$ such that 
					\begin{equation}
					\label{eqn; sys666}
						\begin{array}{l}
							0=b+a\frac{\bar{y}^{\frac{1}{q}}}{\bar{y}^{\frac{1}{q}}+\theta^{\frac{1}{q}}}-\beta \bar{x} \\
							0=\kappa \bar{x}-\gamma \bar{y} \\
						\end{array}
					\end{equation} 
				is satisfied. Define 
					\begin{equation*}
						p(x):=\left(\dfrac{\kappa x}{\gamma \theta}\right)^{\frac{1}{q}}-\dfrac{\beta x -b}{(a+b)-\beta x}.
					\end{equation*}
				Since 
					\begin{displaymath}
						p\left( \frac{b}{\beta}\right)>0 \ \textup{and} \ \lim_{x \rightarrow \frac{b+a}{\beta}^{-}}p(x)=-\infty,
					\end{displaymath} 
				there exists an $\bar{x} \in \left(\frac{b}{\beta}, \frac{b+a}{\beta} \right)$ such that $p(\bar{x})=0$, by the intermediate value theorem. It follows that $\bar{y}=\frac{\kappa}{\gamma}\bar{x}\in\left(\frac{\kappa b}{\gamma \beta},\frac{\kappa(a+b)}{\gamma \beta}\right)$. 
				
				By (\ref{eqn; sys666}),
					\begin{equation}
						\bar{y}=\left(\dfrac{\beta \bar{x}-b}{(a+b)-\beta \bar{x}}\right)^{q}\theta
					\label{eqn; limity}
					\end{equation} 
				so it follows that 
					\begin{equation*}
						\lim_{q\rightarrow 0}(\bar{x},\bar{y})=\left(\frac{\gamma}{\kappa}\theta,\theta\right).
					\end{equation*}	
				
				At this point, we are interested in the eigenvalues of the linearization of (\ref{eqn; sys666}) about $(\bar{x},\bar{y})$. We have 
					\begin{equation*}
						J(\bar{x},\bar{y})=
							\left(\begin{array}{cc}
								-\beta & \dfrac{a\theta^{\frac{1}{q}}\bar{y}^{\frac{1}{q}-1}}{q\left(\bar{y}^{\frac{1}{q}}+\theta^{\frac{1}{q}}\right)^{2}} \\ 
								\kappa & -\gamma
							\end{array}\right),
					\end{equation*}
				which has eigenvalues
					\begin{equation*}
						\lambda_{1,2}=-\frac{\beta + \gamma}{2} \pm \sqrt{\frac{(\beta +\gamma)^{2}}{4}+\dfrac{a\kappa\theta^{\frac{1}{q}}\bar{y}^{\frac{1}{q}-1}}{q\left(\bar{y}^{\frac{1}{q}}+\theta^{\frac{1}{q}}\right)^{2}}-\beta \gamma}.
					\end{equation*}
					
				Recall (\ref{eqn; limity}) and note that the limit 
					\begin{align*}
						\lim_{q\rightarrow 0}\frac{a\kappa\theta^{\frac{1}{q}}\bar{y}^{\frac{1}{q}-1}}{q(\bar{y}^{\frac{1}{q}}+\theta^{\frac{1}{q}})^{2}}=&\lim_{q\rightarrow0}\frac{a\kappa\theta^{\frac{1}{q}}\left(\dfrac{\beta \bar{x}-b}{(a+b)-\beta \bar{x}}\right)\theta^{\frac{1}{q}}}{q\left(\left(\dfrac{\beta \bar{x}-b}{(a+b)-\beta \bar{x}}\right)\theta^{\frac{1}{q}}+\theta^{\frac{1}{q}}\right)^{2}\bar{y}} \\ \\=& 
						\lim_{q\rightarrow0}\frac{a\kappa\left(\dfrac{\beta \bar{x}-b}{(a+b)-\beta \bar{x}}\right)}{q\left(\left(\dfrac{\beta \bar{x}-b}{(a+b)-\beta \bar{x}}\right)+1\right)^{2}\bar{y}}=\infty
					\end{align*}
				since $a < \frac{\beta_{1}\gamma_{1}}{\kappa_{1}}\theta_{1}< a+b$, $\bar{x}\rightarrow \frac{\gamma \theta}{\kappa}$, and $\bar{y}\rightarrow \theta$. From this we can conclude that this point is a saddle, because 
					\begin{equation*}
						\frac{a\kappa\theta^{\frac{1}{q}}\bar{y}^{\frac{1}{q}-1}}{q\left(\bar{y}^{\frac{1}{q}}+\theta^{\frac{1}{q}}\right)^{2}}-\beta \gamma>0
					\end{equation*}
				for sufficiently small $q$, as shown above. 
			\end{proof} 
				
		If we define solutions of the step function system to be the limit as $q \rightarrow 0$ of solutions of the smooth system, then the stable manifolds of the fixed point for the step function system are given by the curves $\Gamma^{(1)}_{l_{1}}$ and $\Gamma^{(0)}_{u_{1}}$ described in Section~\ref{subsec:fixedpoints}. 
						
		Thus, we can heuristically explain the phenomenon of flows reaching the fixed point in finite time, since the eigenvalues of $J(\bar{x},\bar{y})$ go to positive and negative infinity as $q \rightarrow 0$. Thus, even though it takes an infinite amount of time for flows to reach an equilibrium point for any $q>0$ with approach proportional to $e^{\lambda t}$, as $q \rightarrow 0$ we have that $\lambda\rightarrow-\infty$ and the solution can reach the equilibrium in finite time. 
					
		If an orbit of the step function system were to start on the stable manifold, then it can reach the fixed point in finite time, as shown in the following example. Once the trajectory intersects the fixed point the dynamics become ambiguous. This phenomena is analogous to the case where solutions pass through the intersection of two thresholds in the framework of Model 1, as shown by Killough and Edwards \cite{ek2005}.
					
		
	\section{Dynamics Through a Series of Regular Domains}\label{sec:globaldynam}
		In the previous section we discussed the flow within a regular domain. We now use this flow to determine a mapping $M:\mathbb{R}^{2n}_{+}\rightarrow \mathbb{R}^{2n}_{+}$ from one threshold to another, that is, $(x^{(m+1)},y^{(m+1)})=M(x^{(m)},y^{(m)})$, in the limit $q\to 0$. This has been done in~\cite{Edwards2014} for the case $\beta_{i} \neq \gamma_{i}$, but here we also allow equality. As before, with $N=\{1,\dots,n\}$, let $I=\{i\in N\ |\ \beta_{i}=\gamma_{i}\}$ and $J=N\setminus I=\{i\in N\ |\ \beta_{i}\neq \gamma_{i}\}$. In the rest of this section, take $i\in I$ and $j\in J$ and take $q\to 0$.
		
		Suppose that a solution passes through a wall $y_{s_{m}}=\theta_{s_{m}}$ at time $T^{(m)}$, at the point $y_{s_{m}}(T^{(m)})= \theta_{s_{m}}$, $x_{i}(T^{(m)})=x_{i}^{(m)}$, $y_{i}(T^{(m)})=y_{i}^{(m)}$, $x_{j}(T^{(m)})=x_{j}^{(m)}$ and $y_{j}(T^{(m)})=y_{j}^{(m)}$ and proceeds into a regular domain $\mathcal{D}_{h}^{(m)}$, where the index $(m)$ is a counter for the intersections of the solution and walls.  We wish to determine on which wall the trajectory exits $\mathcal{D}_{h}^{(m)}$ and the location and time of the next wall intersection. This is done by calculating the next time at which each protein variable would hit a threshold, independent of other variables. The minimum of these times identifies the variable that switches next.
		
		In each direction $\ell\in N$, the walls of $\mathcal{D}_{h}^{(m)}$ either have $y_{\ell}=\theta_{\ell, h_{\ell}}$ or $y_{\ell}=\theta_{\ell, h_{\ell}+1}$. From (\ref{eqn;regysol}), the next intersection point considering direction $\ell$ independently is one of the following (depending on whether or not $\beta_{\ell}=\gamma_{\ell}$), with either $r_{\ell}=h_{\ell}$ or $r_{\ell}=h_{\ell}+1$:
			\begin{equation}
				\begin{array}{l}
					\theta_{\ell,r_{\ell}}=\frac{\kappa_{\ell}\alpha_{\ell}}{\beta_{\ell}\gamma_{\ell}}+\frac{\kappa_{\ell}(x_{\ell}^{(m)}-\alpha_{\ell}/\beta_{\ell})}{\gamma_{\ell}-\beta_{\ell}}e^{-\beta_{\ell}t_{\ell}}+ 
					(y_{\ell}^{(m)}-\frac{\kappa_{\ell}\alpha_{\ell}}{\beta_{\ell}\gamma_{\ell}}-\frac{\kappa_{\ell}(x_{\ell}^{(m)}-\alpha_{\ell}/\beta_{\ell})}{\gamma_{\ell}-\beta_{\ell}})e^{-\gamma_{\ell}t_{\ell}}\,, \\
					\theta_{\ell,r_{\ell}}=\frac{\kappa_{\ell}\alpha_{\ell}}{\gamma_{\ell}^{2}}+(\kappa_{\ell}x_{\ell}^{(m)}-\frac{\kappa_{\ell}\alpha_{\ell}}{\gamma_{\ell}})t_{\ell}e^{-\gamma_{\ell}t_{\ell}}+ 
					(y_{\ell}^{(m)}-\frac{\kappa_{\ell}\alpha_{\ell}}{\gamma_{\ell}^{2}})e^{-\gamma_{\ell}t_{\ell}}\,, \\
				\end{array}
			\label{eqn;thetaeqn}
			\end{equation}
		where $t_{\ell}$ is defined by $y_{\ell}(t_{\ell})=\theta_{\ell,r_{\ell}}$, when it exists. 
					
		As in~\cite{Edwards2014}, we introduce the variable substitution $a_{0}=\frac{\kappa_{\ell}\alpha_{\ell}}{\beta_{\ell}\gamma_{\ell}}-\theta_{\ell,r_{\ell}}$, $b_{0}=\frac{\kappa_{\ell}}{\gamma_{\ell}-\beta_{\ell}}(x_{\ell}^{(m)}-\alpha_{\ell}/\beta_{\ell})$, $c_{0}=y_{\ell}^{(m)}-\frac{\kappa_{\ell}\alpha_{\ell}}{\beta_{\ell}\gamma_{\ell}}-\frac{\kappa_{\ell}}{\gamma_{\ell}-\beta_{\ell}}(x_{\ell}^{(m)}-\alpha_{\ell}/\beta_{\ell})$, $a_{1}=\frac{\kappa_{\ell}\alpha_{\ell}}{\gamma_{\ell}^{2}}-\theta_{\ell,r_{\ell}}$, $b_{1}=\kappa_{\ell}x_{\ell}^{(m)}-\frac{\kappa_{\ell}\alpha_{\ell}}{\gamma_{\ell}}$, and $c_{1}=y_{\ell}^{(m)}-\frac{\kappa_{\ell}\alpha_{\ell}}{\gamma_{\ell}^{2}}$ then we can write the equations in (\ref{eqn;thetaeqn}) as 
			\begin{equation}
				\begin{array}{l}
					0=a_{0}+b_{0}e^{-\beta_{\ell}t_{\ell}}+c_{0}e^{\gamma_{\ell}t_{\ell}}\,, \ \ \text{or} \\
					0=a_{1}+(b_{1}t_{\ell}+c_{1})e^{-\gamma_{\ell}t_{\ell}}.
				\end{array}
			\label{eqn;thetasol}
			\end{equation}
		Note that the second equation in~(\ref{eqn;thetasol}) is transcendental and the first equation is usually transcendental, for instance if $\gamma_{\ell}/\beta_{\ell} \notin \mathbb{Q}$. There is a pair of such equations~(\ref{eqn;thetasol}), for each choice of $r_{\ell}$ (either $h_{\ell}$ or $h_{\ell}+1$). Thus, in principle, for each $\ell$, there could be multiple solutions $t_{\ell}$ to~(\ref{eqn;thetasol}), thought they need not be positive.  
		
		If, for a particular $\ell$, there exists one or more positive solutions of (\ref{eqn;thetasol}), then let $t_{\ell}^*$ be the minimum positive solution. This is the next hitting time in the $\ell$ direction. If there are no positive solutions, let $t_{\ell}^*=\infty$.
		Now, if $t_{\ell}^*<\infty$ for some $\ell$ then the index $s_{m+1}$ specified by 
$$t^{*}_{s_{m+1}}=\min_{\ell}\{t^{*}_{\ell}\}$$ 
indicates the next wall intersection of the trajectory, $y_{s_{m+1}}=\theta_{s_{m+1}}$ (either $\theta_{s_{m+1},h_{s_{m+1}}}$ or $\theta_{s_{m+1},h_{s_{m+1}}+1}$), at step $(m+1)$ and time $T^{(m+1)}=t^{*}_{s_{m+1}}$. We note that equation~(\ref{eqn;thetasol}) may have to be solved numerically. 
				
		If no such solution exists ($t^*_{s_{m+1}}=\infty$) then the solution stays in the regular domain $\mathcal{D}_{h}^{(m)}$ as $t\rightarrow\infty$ and 
			\begin{displaymath}
					\begin{array}{l}
						x_{i}(t)\rightarrow \alpha_{i}/\beta_{i}\,, \\
						x_{j}(t)\rightarrow \alpha_{j}/\beta_{j}\,, \\
						y_{i}(t)\rightarrow \frac{\kappa_{i}f_{i}}{g_{i}\gamma_{i}}\,,\\
						y_{j}(t)\rightarrow \frac{\kappa_{j}f_{j}}{\gamma_{j}^{2}}\,.
					\end{array}
			\end{displaymath}
				
		When the solution $T^{(m+1)}(x_{s_{m}}^{(m)},y_{s_{m}}^{(m)})$ of (\ref{eqn;thetasol}) is inserted in (\ref{eqn;regxsol}) and (\ref{eqn;regysol}) for $t$, the remaining coordinates $x_{i}$, $y_{i}$, $x_{j}$, and $y_{j}$, $i\in I$, $j \in J$ of the intersection point $(x^{(m+1)},y^{(m+1)})$ with the wall $y_{s_{m}}=\theta_{s_{m+1}}$ can be found as a function of $x^{(m)}$ and $y^{(m)}$. 
		
		Thus, we have described a mapping $M:\mathbb{R}^{2n}_{+}\rightarrow \mathbb{R}^{2n}_{+}$ from one threshold to another. One could write a computer program to iterate this process and analytically integrate trajectories indefinitely. In previous work on Model 1, one could attain a closed form of the mapping from one wall to another, and thus, explicitly calculate returning regions and find conditions for the existence and stability of periodic orbits. Due to the transcendental nature of the solutions in Model 2, the equivalent calculations would involve non-trivial numerical computations. 
		
		In Model 1 the time to next switching, $T^{(m+1)}-T^{(m)}$, varies continuously with respect to the current switching point $y^{(m)}$. However, in Model 2 this is not true when an initial coordinate $(x_{i}^{(m)},y_{i}^{(m)})$ is moved through the curve $\Gamma_{u_{i}}^{(h_{i})}$ or  $\Gamma_{\ell_{i}}^{(h_{i})}$, at which point $T^{(m+1)}(x_{s_{m}}^{(m)},y_{s_{m}}^{(m)})$ has a discontinuity. On one side of the curve, the trajectory passes through the threshold. On the other side, it turns and comes back before crossing, and thus crosses later at another threshold.
		
		Suppose we enter a regular domain $\mathcal{D}_{h}$ at time $t^{(m)}$. If $\frac{\alpha_{i}}{\beta_{i}}<\frac{\gamma_{i}}{\kappa_{i}}\theta_{i,h_{i}}$, $\frac{\alpha_{i}}{\beta_{i}}>\frac{\gamma_{i}}{\kappa_{i}}\theta_{i,h_{i}+1}$, or $\frac{\gamma_{i}}{\kappa_{i}}\theta_{i,h_{i}}<\frac{\alpha_{i}}{\beta_{i}}<\frac{\gamma_{i}}{\kappa_{i}}\theta_{i,h_{i}+1}$ for some $i$, then $\Gamma^{(h_{i})}_{u_{i}}$, $\Gamma^{(h_{i})}_{\ell_{i}}$, or $\Gamma^{(h_{i})}_{u_{i}}$ and $\Gamma^{(h_{i})}_{\ell_{i}}$ exist respectively and can be found by using the method given in Section~\ref{subsec:fixedpoints}. Given the initial condition $y_{i}(t^{(m)})=y_{i}^{(m)}$, let $(x^{*}_{i},y_{i}^{(m)})$ be the point where $y_{i}=y_{i}^{(m)}$ intersects $\Gamma^{(h_{i})}_{u_{i}}$ and let $(x'_{i},y_{i}^{(m)})$ be the point where $y_{i}=y_{i}^{(m)}$ intersects $\Gamma^{(h_{i})}_{\ell_{i}}$ . We can implicitly describe the time it takes for $y_{i}$ to reach it's next threshold as a function of initial condition $x_{i}^{(m)}$. By Equation~(\ref{eqn;regysol}), the time $t$ it takes for $y_{i}$ to cross a threshold is then given implicitly by 
			\begin{equation*}
				 \Bigg \{ 
					\begin{array}{lcl}
						\theta_{i,h_{i}}=y_{i}(t;x_{i}^{(m)},y_{i}^{(m)}) & \textup{if} & x_{i}^{(m)}<x_{i}^{*} \\
						\theta_{i,h_{i}+1}=y_{i}(t;x_{i}^{(m)},y_{i}^{(m)}) & \textup{if} & x_{i}^{(m)}>x_{i}^{*}
					\end{array}
			\end{equation*}
		Although these equations are transcendental, they can easily be solved numerically. It is now clear that there is a discontinuity at the point $x_{i}^{(m)}=x_{i}^{*}$ or $x_{i}^{(m)}=x_{i}'$, accordingly.
		
		By the comments after the proof of Proposition~\ref{fixpointprop}, we note that a time discontinuity still occurs when $q>0$, although the exact calculation of the time map is harder to compute explicitly. In the case discussed in Section~\ref{subsec:nonunique}, however, it is clear that the time discontinuity occurs at the fixed point near the threshold, and the stable manifold represents an asymptote in the plot of the time to the next switching.

			\begin{exmp}
				Consider the 1-gene network 
					\begin{equation*}
						\begin{array}{l}
							\dot{x}=1-Z_{1}+Z_{2}-x \\
							\dot{y}=3x-y
						\end{array}
					\end{equation*}
				with $\theta_{1}=1$ and $\theta_{2}=2$. We are interested in finding the time $t$ it takes for $y(t)$ to cross the next threshold from $\theta_{1}$. Since $\frac{\alpha}{\beta}=0<\frac{1}{3}=\frac{\gamma}{\kappa}\theta_{1}$ in $\mathcal{D}_{1}$, the time to next switching $t(x^{(0)})$ will be determined by $y_{i}(t;x_{i}^{(0)},y_{i}^{(0)})$. 
				
				Using the method described in Section~\ref{subsec:fixedpoints}, we find that $x^{*}$ is the solution of 
					\begin{equation*}
						1=\ln\left(\frac{3x^{*}}{2}\right)+\frac{1}{3x^{*}}
					\end{equation*}
				which is approximately 1.43702. Thus, having that  
					\begin{equation*}
						y_{i}(t;x_{i}^{(0)},y_{i}^{(0)})=(3x^{(0)}t+1)e^{-t}
					\end{equation*}
				it follows that $t(x^{(0)})$ is given implicitly by
					\begin{equation}
						\Bigg \{
						\begin{array}{lcl}
							1=(3x^{(0)}t+1)e^{-t} & \textup{if} & \frac{1}{3}<x^{(0)}<x^{*}\approxeq 1.43702 \\
							2=(3x^{(0)}t+1)e^{-t} & \textup{if} & x^{(0)}>x^{*}\approxeq 1.43702
						\end{array}
					\label{timefunc}
					\end{equation} 
				(see Figure~\ref{fig:timeplot}). We must have that $x^{(0)}\geq \frac{1}{3}$ because otherwise the flow is exiting the domain rather than entering.\qed
					\begin{figure}[!htb]
						\hspace{-1.3cm}
						\includegraphics[scale=.45]{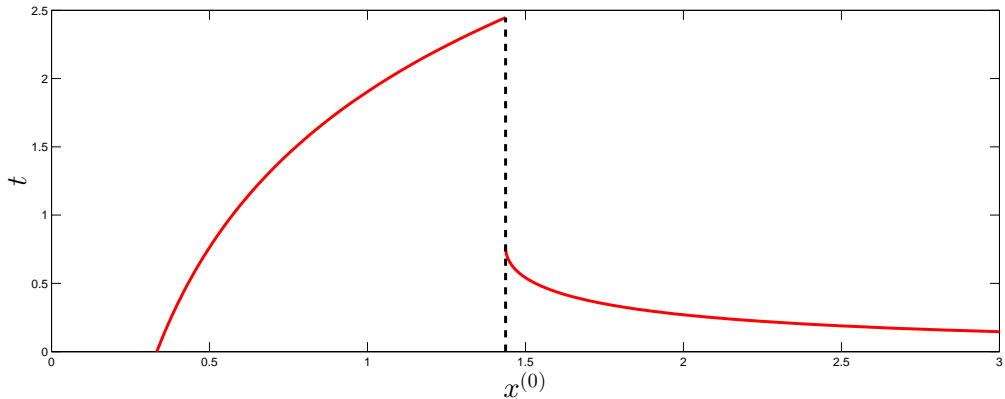}
						\caption{Plot of (\ref{timefunc}): time for $y$ to intersect a threshold as a function of $x^{(0)}$, the initial position on $y=\theta_{1}=1$.}
					\label{fig:timeplot}
					\end{figure}
								
			\end{exmp}		
	\section{Symbolic Representation}\label{sec:graphrep}
		Here we extend the familiar concept of a state transition diagram to the expanded network. The results in this section will be useful in the following section. 
		
		In order to better discuss the qualitative dynamics of Model 2, we encode the states of the system onto a directed graph. Each vertex represents a specific region of phase space (defined below), and the direction along each edge describes the direction of the flow from one domain to another. For this, the concept of a regular domain is too general, so we define another division of phase space into what we call Pseudo-State Domains. With $h,j \in \mathcal{H}$, we define a \textit{Pseudo-State Domain} to be
			\begin{equation*}
				\mathcal{P}_{h,j}=\mathcal{P}_{h_{1},\dots,h_{n},j_{1}\dots,j_{n}}=\prod_{i=1}^{n}(\theta_{i,h_{i}},\theta_{i,h_{i}+1})\times\left(\dfrac{\gamma_i}{\kappa_i}\theta_{i,j_i},\dfrac{\gamma_i}{\kappa_i}\theta_{i,j_i+1}\right).
			\end{equation*}	
		Under this definition it is clear that for a given regular domain $\mathcal{D}_{h}$, $\overline{\mathcal{D}}_{h}=\bigcup_{j=1}^{n}\overline{\mathcal{P}}_{h,j}$, where $\overline{U}$ denotes the closure of a set $U$.
		
		We can describe the activity of the network symbolically by relating it to a directed graph, which we call a pseudo-state transition diagram, or PTD, where we assign a pseudo-state domain $\mathcal{P}_{h,j}$ to each vertex. Then, the direction of the flow through pseudo-state domains is uniquely represented by a direction on the edge between the corresponding vertices. We show that the flow directions are well-defined in the Lemma below.
		
		Recalling that $p_{i}$ is the number of threshold values for $y_{i}$, the PTD has $M=\prod_{i=1}^{n}(p_{i}+1)^{2}$ vertices. 
		
		This digraph represents a state transition diagram encoding the logical structure of a network. Therefore, it defines an equivalence class of networks with the same underlying structure. 
		
			\begin{newlem}
				If $\frac{\alpha_{i}}{\beta_{i}}\neq \frac{\gamma_{i}}{\kappa_{i}}\theta_{i,h_{i}}$ for all $i$, then every edge on the PTD has a unique direction.
			\label{lem;nobranch}
			\end{newlem}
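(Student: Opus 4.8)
The plan is to examine directly the velocity component normal to each codimension-one face separating two adjacent pseudo-state domains, and to show that this component keeps a constant sign on the (relatively open) face. Since an edge of the PTD is by construction directed according to the sign of the flow across the shared face, a constant sign is exactly what is needed for the edge direction to be well defined.

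First I would classify the faces. Two pseudo-state domains are adjacent precisely when their index pairs $(h,j)$ differ in a single coordinate by one, so a shared face is either a piece of a threshold hyperplane $\{y_i=\theta_{i,h_i}\}$ (when the $h$-indices differ in component $i$) or a piece of a scaled hyperplane $\{x_i=\frac{\gamma_i}{\kappa_i}\theta_{i,j_i}\}$ (when the $j$-indices differ in component $i$). In the first case the relevant normal component is $\dot y_i$, in the second it is $\dot x_i$, and only these components determine the crossing direction across the face; the remaining $2n-1$ components are tangential and irrelevant.

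Next I would treat the two cases. On a threshold face $\{y_i=\theta_{i,h_i}\}$ we have $\dot y_i=\kappa_i x_i-\gamma_i\theta_{i,h_i}=\kappa_i\bigl(x_i-\frac{\gamma_i}{\kappa_i}\theta_{i,h_i}\bigr)$. The decisive observation is that, inside a pseudo-state domain, $x_i$ is confined to an interval $\bigl(\frac{\gamma_i}{\kappa_i}\theta_{i,j_i},\frac{\gamma_i}{\kappa_i}\theta_{i,j_i+1}\bigr)$ bounded by two consecutive scaled thresholds, whereas $\frac{\gamma_i}{\kappa_i}\theta_{i,h_i}$ is itself a scaled threshold and so cannot lie in the interior of that interval. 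Hence $x_i-\frac{\gamma_i}{\kappa_i}\theta_{i,h_i}$ does not change sign on the face, so $\dot y_i$ has a constant sign; moreover $\dot y_i$ is continuous across the hyperplane (the $y$-equations do not involve $Z$), so the same sign is seen from both adjacent domains. On a scaled face $\{x_i=\frac{\gamma_i}{\kappa_i}\theta_{i,j_i}\}$ the two adjacent domains lie in a common regular domain $\mathcal{D}_h$, so $\alpha_i=F_i(Z)$ takes a single value there, and $\dot x_i=\alpha_i-\beta_i x_i=\beta_i\bigl(\frac{\alpha_i}{\beta_i}-\frac{\gamma_i}{\kappa_i}\theta_{i,j_i}\bigr)$ is literally constant on the whole face; by the hypothesis $\frac{\alpha_i}{\beta_i}\neq\frac{\gamma_i}{\kappa_i}\theta_{i,j_i}$ this constant is nonzero. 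In every case the normal component has one definite sign, so the edge direction is unique.

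The step I expect to carry the weight is the threshold-face case: it is exactly here that the particular partition of the $x$-coordinates at the scaled thresholds $\frac{\gamma_i}{\kappa_i}\theta_{i,j_i}$ — the defining feature of pseudo-state domains as opposed to regular domains — does its work, pinning down the sign of $\dot y_i$. The scaled-face case is then routine, amounting only to invoking the non-degeneracy hypothesis (which coincides with Assumption~1, read over all binary vectors $Z$) to rule out a tangency $\dot x_i=0$ on the face.
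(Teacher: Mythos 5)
Your proof is correct and takes essentially the same approach as the paper's: both arguments fix the sign of the velocity component normal to each shared face, using the confinement of $x_i$ between consecutive scaled thresholds $\frac{\gamma_i}{\kappa_i}\theta_{i,j_i}$ to pin the sign of $\dot y_i$ on threshold faces, and the constancy of $\alpha_i$ throughout a common regular domain together with the hypothesis $\frac{\alpha_i}{\beta_i}\neq\frac{\gamma_i}{\kappa_i}\theta_{i,h_i}$ to pin the sign of $\dot x_i$ on scaled faces. Your explicit classification of the two face types (and the correct aside that the hypothesis is just Assumption~1 read over all binary vectors) merely organizes the paper's argument a bit more cleanly.
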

			\begin{proof}
				The flow is transverse to edges in $y_i$ directions because of Assumption~\ref{ass; ass1}. The flow is transverse to edges in $x_i$ by the assumption of this Lemma. To show uniqueness we note that the flow in $x_{i}$ is monotonic towards $\frac{\alpha_{i}}{\beta_{i}}$. In order for $x_{i}$ to cross $\frac{\gamma_{i}}{\kappa_{i}}\theta_{i,h_{i}}$ in both directions we would require that $y_{j}$ cross a threshold for some $j$. But, once this happens we are in a different pseudo-state domain. 
				
				By the flow of (\ref{def;Model2}) or (\ref{eqn;reg}), $y_{i}$ can only cross a threshold $y_{i}=\theta_{i,h_{i}}$ in the increasing direction for $x_{i}>\frac{\gamma_{i}}{\kappa_{i}}\theta_{i,h_{i}}$ and in the decreasing direction for $x_{i}<\frac{\gamma_{i}}{\kappa_{i}}\theta_{i,h_{i}}$. By definition of a pseudo-state domain, it follows that the direction across a threshold is unique.
			\end{proof}
		
	\section{Negative Feedback Loops}\label{sec:negfeed}
			Feedback loops are a basic and important structure in gene regulation, being the essential motif behind oscillatory process such as circadian rhythms and the cell cycle, for example. Proofs of results concerning existence and stability of periodic orbits have been obtained in many negative feedback systems, and in particular in piecewise-linear models for gene regulation~\cite{FG2009,gp78b,Snoussi1989}. These results are not sufficient to deal with the current model framework, with only half the variables involving steep sigmoidal or step function switches. Since this is an important analytic result, albeit only for a particular simple structure, we obtain in this section a comparable result on existence of periodic orbits for Model 2.
			
			We work with the following general form of a feedback loop into which framework Model 2 fits: 
				\begin{equation}
					\begin{array}{ll}
						\begin{array}{l}
							\dot{x}_{1}=-b_{1}x_{1}+ a_{1}f_{1}(y_{n}) \\
							\dot{y}_{1}=-d_{1}y_{1}+c_{1}x_{1} \\
							\dot{x}_{i}=-b_{i}x_{i}+a_{i}f_{i}(y_{i-1})\\
							\dot{y}_{i}=-d_{i}y_{i}+c_{i}x_{i}\,,
						\end{array} &\quad  i=2,\dots, n,
					\end{array}
				\label{eqn;cyclicfeedbackdefn}
				\end{equation}
where constants $a_i, b_i, c_i, d_i$ are taken to be strictly positive for all $i=1,\ldots,n$. We take $f_i\ge 0$ for all $i$, and 
without loss of generality, 
$f_i(y_{i-1})\in [0,1]$ for $y_{i-1}\ge 0$, and similarly for $f_1(y_n)$. In what follows, we will indentify index $0$ with index $n$, so that $y_{i-1}$ is $y_n$ when $i=1$. In our Model 2, we have $f_i(y_{i-1})=\mathcal{S}(y_{i-1},\theta_{i-1},q)$, the sigmoid with $\theta_{i-1}>0$, and the prototypical examples are the Hill functions $H(y_{i-1},\theta_{i-1},q)$ or $1-H$, in which case $S(\theta_{i-1},\theta_{i-1},q)=\frac 12$. We may take this as the definition of the threshold for general $\mathcal{S}$. 

For the following, however, we only require that for all $i=1,\ldots n$, $f_i\in C^1(\mathbb{R})$ is monotone (either increasing or decreasing), with $0\leq f_i \leq 1$, and $f_i(y_{i-1})$ set to $f_i(0)$ for $y_{i-1}<0$ if $f_i(0)=0$ or $1$ (as is the case for the Hill functions, for example). In this case we call (\ref{eqn;cyclicfeedbackdefn}) a {\em monotone feedback loop}.
			
			For the following section we apply the change of variables 
				\begin{equation*}
					\begin{array}{lcl}
						x_{1} & \longmapsto & w_{1} \\
						y_{1} & \longmapsto & w_{2} \\
							 & \ddots & \\
						x_{n} & \longmapsto & w_{2n-1} \\
						y_{n} & \longmapsto & w_{2n},
					\end{array} 
				\end{equation*}
			so that (\ref{eqn;cyclicfeedbackdefn}) becomes 
				\begin{equation}
					\begin{array}{ll}
						\begin{array}{l}
							\dot{w}_{1}=-b_{1}w_{1}+ a_{1}f_{1}(w_{2n}) \\
							\dot{w}_{2}=-d_{1}w_{2}+c_{1}w_{1} \\
							\dot{w}_{i}=-b_{i}w_{i}+a_{i}f_{i}(w_{i-1}) \\
							\dot{w}_{i+1}=-d_{i}w_{i+1}+c_{i}w_{i},
						\end{array} & \quad i = 3,5, \dots, 2n-1,
					\end{array}
				\label{eqn;cfs,chng}
				\end{equation}
				where the indices of the $f_i, a_i, b_i, c_i$ and $d_i$ have been renumbered in the natural way.
			We say that (\ref{eqn;cfs,chng}) is a {\em monotone negative feedback loop} provided that 
				\begin{equation}
					\dfrac{\partial\dot{w}_{i}}{\partial w_{i-1}}<0
				\label{negativefeedbackcondition}
				\end{equation}
			occurs an odd number of times (identifying the index $0$ with $n$). We say that (\ref{eqn;cfs,chng}) is a {\em monotone positive feedback loop} if (\ref{negativefeedbackcondition}) occurs an even number of times. 
					
			\begin{exmp}\label{ex:STD}
				Taking $q\rightarrow 0$, Figure~\ref{fig:STD} is a pseudo-state transition diagram for a 2-gene negative feedback loop 
					\begin{align*}
						\dot{w}_1&=-w_1+1-Z_2(w_4) \\
						\dot{w}_2&=-w_2+w_1 \\
						\dot{w}_3&=-w_3+Z_1(w_2) \\
						\dot{w}_4&=-w_4+w_3 \\
					\end{align*}
				with $\theta_1=\theta_2=2$. Note that all trajectories eventually converge to the domain $0000$, in correspondence with the fixed point $(w_1,w_2,w_3,w_4)=(1,1,0,0)$. 
 					\begin{figure}[!htb]
						\centering
						\includegraphics[scale=.45]{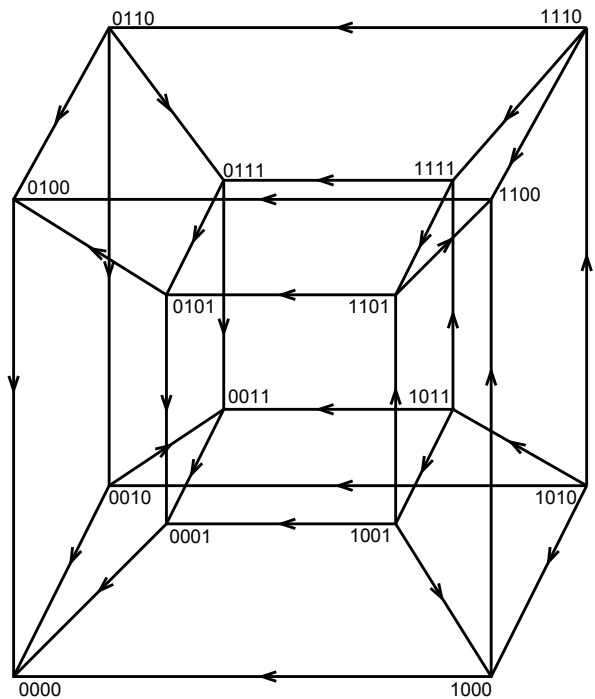}
						\caption{Pseudo-state transition diagram for the negative feedback loop of Example~\ref{ex:STD}}
						\label{fig:STD}
					\end{figure}\qed
			\end{exmp}		

We are mainly interested in monotone negative feedback loops that allow the possibility of oscillation. Each variable in Equation~(\ref{eqn;cyclicfeedbackdefn}) or (\ref{eqn;cfs,chng}) has a range of values in which it must eventually lie. In fact, there is an invariant region for the whole system. Any variable initially outside this range must fall into it. Furthermore, in the case that $f_i(w_{i-1})=\mathcal{S}(w_{i-1},\theta_{i-1},q)$ ($i$ odd), if $\theta_{i-1}$ does not lie in this interval, then this variable cannot switch and oscillation is precluded. The intervals can easily be determined from (\ref{eqn;cfs,chng}) as $w_i\in [0,w_{i,{\rm max}}]$, where
				\begin{equation}
					\begin{array}{l}
							w_{i,{\rm max}}= \frac{a_i}{b_i},\mbox{ if $i$ is odd}, \\
							w_{i,{\rm max}} = \frac{c_{i-1}a_{i-1}}{d_{i-1}b_{i-1}},\mbox{ if $i$ is even}.
					\end{array}
				\label{eqn;invariant}
				\end{equation}
Thus, the class of monotone negative feedback loops in which there is a possibility of oscillation from a structural point of view, is that for which
				\begin{equation}\label{eq:theta}
					0 < \theta_i < w_{i,{\rm max}}, \mbox{ for every even $i$}.
				\end{equation}
We will restrict our attention to this class when we consider cycles in section~\ref{subsec:cycles}, below.

		\subsection{Canceling Method}\label{subsec:cancel}
			In this section we provide a method by which we can change the variables in a monotone negative feedback loop of the form (\ref{eqn;cyclicfeedbackdefn}) or (\ref{eqn;cfs,chng}) such that $(\ref{negativefeedbackcondition})$ occurs only once, and such that if (\ref{eq:theta}) holds initially, then it is conserved. This will be useful in proving a result on existence of periodic solutions, using a previously established theorem. This change of variables idea has certainly been proposed before (for example, by Mallet-Paret and Smith~\cite{MPS1990}), but here we need to ensure that our changes of variables keep the system within the well-defined class given by (\ref{eqn;cyclicfeedbackdefn}) or (\ref{eqn;cfs,chng}).
				
			We represent the system with a cycle graph $\mathcal{C}$ with vertices $1,2,\dots,2n$. Each vertex corresponds to a variable $w_{i}$, and each edge represents the influence of $w_{i-1}$ on $w_{i}$. We define an edge $(w_{i-1},w_i)$ to be \textit{negative} and label it ``--'' if (\ref{negativefeedbackcondition}) holds, and we define an edge $(w_{i-1},w_i)$ to be \textit{positive} and label it ``+'' if $\frac{\partial \dot{w}_{i}}{\partial w_{i-1}}>0$ holds. Thus, in a negative feedback loop we have an odd number of negative edges (see Figure~\ref{fig:cycle}).
				
			Let $\mathcal{C}$ be a cycle corresponding to a negative feedback loop. If necessary, change the indices so that the edge $(w_{2n}, w_{1})$ is negative, and label the other $2n-1$ edges positive or negative, accordingly. Note that initially, edges $(w_i,w_{i+1})$ can only be negative if $i$ is even.
				 
				\begin{figure}
				\centering
				\begin{tikzpicture}
				[->,>=stealth',shorten >=1pt,auto,node distance=2cm,thick,main node/.style={circle,fill=red!20,draw,font=\sffamily\large\bfseries}]
				
				  \node[main node] (1) {$w_{1}$};
				  \node[main node] (2) [below left of=1] {$w_{2}$};
				  \node[main node] (3) [below of=2] {$w_{3}$};
				  \node[main node] (4) [below right of=3] {$w_{4}$};
				  \node[main node] (5) [right of=4] {$w_{5}$};
				  \node[main node] (6) [above right of=5] {$w_{6}$};
				  \node[main node] (7) [above of=6] {$\dots$};
				  \node[main node] (8) [right of=1] {$w_{2n}$};
				  	
				  \path[every node/.style={font=\sffamily\small}]
				    (1) edge[bend right] node[left]  {+} (2)
				    (2) edge [bend right] node[left] {--} (3)
				    (3) edge [bend right] node[right] {+} (4)
				    (4) edge [bend right] node[right, yshift=.25cm] {--} (5)
				    (5) edge [bend right] node[right] {+} (6)
				    (6) edge [bend right] node[right] {+} (7)
				    (7) edge [bend right] node[right] {+} (8)
				    (8) edge [bend right] node[right, yshift=.25cm] {--} (1); 
			\end{tikzpicture}
					\caption{A negative feedback loop.}
					\label{fig:cycle}
			\end{figure}
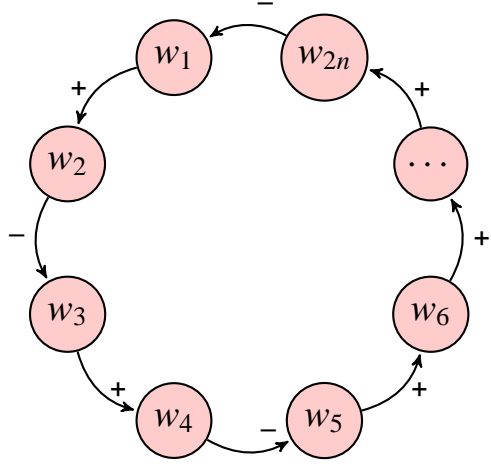
					
			We define the following rule for change of variables: if an edge $(w_{i},w_{i+1})$ is negative, then we make the change of variables $\tilde{w}_i=w_{i,{\rm max}}-w_{i}$. Let $i_{1}$ be the largest index, $i_{1}\leq 2n$, such that $(w_{i_{1}},w_{i_{1}+1})$ has a negative label. By observation $i_1$ is even, so applying our change of variables, we get
				\begin{equation*}
						\begin{array}{ll}
							\dot{\tilde{w}}_{i_1}&=- d_{i_1-1}\tilde{w}_{i_1}-c_{i_1-1}w_{i_1-1}+d_{i_1-1}w_{i_1,{\rm max}},\\
							\dot{w}_{i_1+1}&=-b_{i_1+1}w_{i_1+1}+a_{i_1+1}\tilde{f}_{i_1+1}(\tilde{w}_{i_1}),
						\end{array}
				\end{equation*}		
				where $\tilde{f}_{i_1+1}(\tilde{w}_{i_1})=f_{i_1+1}(w_{i_1,{\rm max}}-\tilde{w}_{i_1})=f_{i_1+1}(w_{i_1})$ so that $\frac{d\tilde{f}_{i_1+1}}{d\tilde{w}_{i_1}}=-\frac{df_{i_1+1}}{dw_{i_1}}$. In the case of our sigmoids with thresholds, $\tilde{\mathcal{S}}_{i_1}(\tilde{w}_{i_1},\tilde{\theta}_{i_1},q)=\mathcal{S}_{i_1}(w_{i_1},\theta_{i_1},q)$, and $\tilde{\theta}_{i_1}=w_{i_1,{\rm max}}-\theta_{i_1}$.
			The sign of $(\tilde{w}_{i_{1}},w_{i_{1}}+1)$ is now positive, but we have made the sign of $(w_{i_{1}-1},\tilde{w}_{i_{1}})$ negative in the process and added a constant term to the $\tilde{w}_{i_1}$ equation. To amend this, we make the change of variables $\tilde{w}_{i_{1}-1}=w_{i_{1}-1,{\rm max}}-w_{i_{1}-1}$ and arrive at the equations
				\begin{equation*}
						\begin{array}{ll}
							\dot{\tilde{w}}_{i_1-1}&=-b_{i_1-1}\tilde{w}_{i_1-1}+b_{i_1-1}w_{i_1-1,{\rm max}}-a_{i_1-1}f_{i_1-1}(w_{i_1-2}),\\
							\dot{\tilde{w}}_{i_1}&=- d_{i_1-1}\tilde{w}_{i_1}+c_{i_1-1}\tilde{w}_{i_1-1}+d_{i_1-1}w_{i_1,{\rm max}}-c_{i_1-1}w_{i_1-1,{\rm max}},\\
							\dot{w}_{i_1+1}&=-b_{i_1+1}w_{i_1+1}+a_{i_1+1}\tilde{f}_{i_1+1}(\tilde{w}_{i_1}).
						\end{array}
				\end{equation*}
				Noting that $b_{i_1-1}w_{i_1-1,{\rm max}}=a_{i_1-1}$ and that $d_{i_1-1}w_{i_1,{\rm max}}-c_{i_1-1}w_{i_1-1,{\rm max}}=0$, this can be written as
				\begin{equation*}
						\begin{array}{ll}
							\dot{\tilde{w}}_{i_1-1}&=-b_{i_1-1}\tilde{w}_{i_1-1}+a_{i_1-1}\tilde{f}_{i_1-1}(w_{i_1-2}),\\
							\dot{\tilde{w}}_{i_1}&=- d_{i_1-1}\tilde{w}_{i_1}+c_{i_1-1}\tilde{w}_{i_1-1},\\
							\dot{w}_{i_1+1}&=-b_{i_1+1}w_{i_1+1}+a_{i_1+1}\tilde{f}_{i_1+1}(\tilde{w}_{i_1}),
						\end{array}
				\end{equation*}
				where $\tilde{f}_{i_1-1}(w_{i_1-2})=1-f_{i_1-1}(w_{i_1-2})$.
			Thus, $(\tilde{w}_{i_1-1},\tilde{w}_{i_1})$ is now positive. In the case of the thresholded sigmoids, 
			$$\tilde{\mathcal{S}}_{i_1-1}(\tilde{w}_{i_1-2},\theta_{i_1-2},q)=1-\mathcal{S}_{i_1-1}(w_{i_1-2},\theta_{i_1-2},q)\,,$$ 
			with the same threshold, since $1-\mathcal{S}_{i_1-1}(\theta_{i_1-2},\theta_{i_1-2},q)=1-\frac 12=\frac 12$.
			
			Now repeat the process. Recursively, if $i_{j}$ is the largest index such that $(w_{i_{j}-1},w_{i_{j}})$ is a negative edge, switch $(w_{i_{j}-1},w_{i_{j}})$ and $(w_{i_{j}-2},w_{i_{j}-1})$. Since $i_{1}>\dots>i_{j}$, we eventually reach $i_j=1$ and $(w_{i_j-1},w_{i_{r}})=(w_{2n}, w_{1})$, which is the unique negative edge, and we are done.
			
			In order to show that the new system is still of the form~\eqref{eqn;cfs,chng}, we need only verify that $\tilde{f}_{i_1+1}(\tilde{w}_{i_1})$ and $\tilde{f}_{i_1-1}(w_{i_1-2})$ are monotone, $C^1$ on $\mathbb{R}$, and in $[0,1]$. These are all obvious from their definitions in terms of $f_{i_1+1}$ and $f_{i_1-1}$, which are assumed to satisfy these conditions, and the same applies at each step around the cycle. Thus, we still have a system in the form of \eqref{eqn;cfs,chng}. In the case that we have sigmoid functions with thresholds, $\tilde{\mathcal{S}}_{i_1+1}(\tilde{w}_{i_1},\tilde{\theta}_{i_1},q)$ is positive at $\tilde{w}_{i_1}=0$ and reaches $1$ at $\tilde{w}_{i_1}=w_{i_1,{\rm max}}$ if $\mathcal{S}_{i_1+1}(0,\theta_{i_1},q)=1$, but since we then define $\mathcal{S}_{i_1+1}(w_{i_1},\theta_{i_1},q)=1$ for $w_{i_1}<0$, we also have $\tilde{\mathcal{S}}_{i_1+1}(\tilde{w}_{i_1},\tilde{\theta}_{i_1},q)=1$ for $\tilde{w}_{i_1}>w_{i_1,{\rm max}}$. Also, $\tilde{\theta}_{i_1}=w_{i_1,{\rm max}}-\theta_{i_1}\in [0,w_{i_1,{\rm max}}]$ whenever $\theta_{i_1}\in  [0,w_{i_1,{\rm max}}]$. Furthermore, $\tilde{\mathcal{S}}_{i_1-1}(w_{i_1-2},\theta_{i_1-2},q)$ has the same threshold as $S_{i_1-1}(w_{i_1-2},\theta_{i_1-2},q)$. Thus, the new system satisfies \eqref{eq:theta} if system~\eqref{eqn;cfs,chng} does.

			Since we may apply a change of variables to any negative feedback loop to obtain another negative feedback loop with a single negative connection, we assume henceforth that all negative feedbacks loops have a single negative connection.   	
					
		\subsection{Cycles}\label{subsec:cycles}
			In Model 1, trajectories of a negative feedback loop follow a cyclic sequence of boxes in phase space if the thresholds are in appropriate ranges. Here we discuss how a similar phase space structure occurs in Model 2, where solutions cycle through boxes corresponding to pseudo-state domains rather than regular domains.
			
			Let $A=\{0,1\}$. In what follows we describe the pseudo-state of a pair $(x_{i},y_{i})$ by assigning it a word $w\in A^{2}$ according to Table~\ref{table1}. \\ 
				\begin{table}
					\centering
					\begin{tabular}{ |c|c| }
						\hline
						$(x_{i},y_{i})$ & $a\in A^{2}$ \\ \hline
						$x_{i}< \frac{d_i}{c_i}\theta_i$, $y_{i}<\theta_{i}$ & $00$ \\ \hline
						$x_{i}> \frac{d_i}{c_i}\theta_i$, $y_{i}<\theta_{i}$ & $10$ \\ \hline
						$x_{i}> \frac{d_i}{c_i}\theta_i$, $y_{i}>\theta_{i}$ & $11$ \\ \hline
						$x_{i}< \frac{d_i}{c_i}\theta_i$, $y_{i}>\theta_{i}$ & $01$ \\ \hline
					\end{tabular}
					\caption{Symbolic description of pseudo-state.}
					\label{table1}
				\end{table}
			We can extend this to describe the pseudo-state of the whole system by assigning a word $a\in A^{2n}$ to the system so that each pair of symbols $a_{i}a_{i+1}$ corresponds to the state of $(x_{i},y_{i})$ according to Table~\ref{table1}.
			
			To show the existence of cyclic sequences of pseudo-state domains in phase space, we first need to establish a property of trajectories of negative feedback loops.
			
				\begin{newlem}
				\label{lem:nocorners}
					In a monotone negative feedback loop, in the limit $q \to 0$, no pair $(x_i,y_i)$ can pass through the point $(\frac{d_i}{c_i}\theta_i,\theta_i)$ from the domains 00 or 11.
				\end{newlem}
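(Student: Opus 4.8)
The plan is to reduce the statement to a local, two-dimensional computation in the $(x_i,y_i)$-plane, using the fact that the pairs decouple inside a regular domain. In the limit $q\to 0$ the regulator term $f_i(y_{i-1})$ is locally constant near the corner, equal to some $\phi_i\in\{0,1\}$ (away from the measure-zero instant at which $y_{i-1}=\theta_{i-1}$), so the pair obeys $\dot{x}_i=a_i\phi_i-b_ix_i$, $\dot{y}_i=c_ix_i-d_iy_i$ independently of the other variables, with focal $x$-coordinate $x_i^*=a_i\phi_i/b_i\in\{0,a_i/b_i\}$. The corner $C=(\frac{d_i}{c_i}\theta_i,\theta_i)$ lies on the $y_i$-nullcline, so $\dot{y}_i=0$ there. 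I would then observe that $C$ is a \emph{regular} (non-equilibrium) point of this flow: by Assumption~\ref{ass; ass1} the focal point is not on a threshold, i.e.\ $y_i^*=\frac{c_i}{d_i}x_i^*\neq\theta_i$, which is exactly $x_i^*\neq\frac{d_i}{c_i}\theta_i$, hence $\dot{x}_i\neq 0$ at $C$.

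The key step is a concavity argument. Since $\ddot{y}_i=c_i\dot{x}_i-d_i\dot{y}_i$ reduces to $c_i\,\dot{x}_i(C)\neq 0$ at the corner, $C$ is a \emph{strict} local extremum of $y_i(t)$ along the unique trajectory through it. Writing $v:=\dot{x}_i(C)$ and expanding about the passage time gives $x_i=\frac{d_i}{c_i}\theta_i+vt+O(t^2)$ and $y_i=\theta_i+\frac{1}{2}c_iv\,t^2+O(t^3)$. If $v<0$ (equivalently $\phi_i=0$, so $x_i^*=0<\frac{d_i}{c_i}\theta_i$), then $y_i<\theta_i$ for $t\neq 0$ and $x_i$ is decreasing, so the trajectory reaches $C$ from domain $10$ and continues into $00$. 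If $v>0$ (this is the case $\phi_i=1$, for which $x_i^*=a_i/b_i>\frac{d_i}{c_i}\theta_i$ by the oscillation condition~\eqref{eq:theta}), then $y_i>\theta_i$ for $t\neq 0$ and $x_i$ is increasing, so the trajectory reaches $C$ from domain $01$ and continues into $11$. In either case the pair approaches $C$ out of domain $10$ or $01$, never out of $00$ or $11$, which is the claim; these two trajectories are precisely the grazing curves $\Gamma_{u_{i}}^{(h_{i})}$ and $\Gamma_{l_{i}}^{(h_{i})}$ of Section~\ref{subsec:fixedpoints}.

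The main obstacle I anticipate is not the computation but the bookkeeping of ``from.'' One must read ``passing through $C$ from domain $00$ (resp.\ $11$)'' as the trajectory occupying that domain at times immediately \emph{before} reaching the corner, and one must confirm that the $y_i$-nullcline, which does run locally through $00$ and $11$, affords no backdoor approach. This is exactly what the strict-extremum conclusion rules out: a trajectory confined near $C$ to $\{y_i<\theta_i\}$ cannot have come from $\{y_i>\theta_i\}$, and the sign of $v$ then pins down the single off-diagonal source domain. A secondary point to dispatch is the non-generic coincidence in which $y_{i-1}$ switches just as $(x_i,y_i)$ reaches $C$, flipping $\phi_i$; excluding this guarantees $\phi_i$ is genuinely constant through the passage, so the local analysis applies.
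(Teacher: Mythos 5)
Your proof is correct, but it proceeds by a genuinely different route than the paper's. The paper argues globally, by contradiction from uniqueness and flow directions: it takes the unique trajectory $\Delta$ through the corner, supposes $\Delta$ enters the domain $00$, and then sandwiches a nearby trajectory between $\Delta$ and the threshold, forcing it to exit $00$ through $y_i=\theta_i$ in contradiction with the sign of $\dot{y}_i$ below the nullcline; the case $\frac{a_i}{b_i}<\frac{d_i}{c_i}\theta_i$ is then dispatched separately by an invariance argument (both focal points lie in $00$). You instead argue locally: since $\dot{y}_i=0$ and, by Assumption~\ref{ass; ass1}, $\ddot{y}_i=c_i\dot{x}_i\neq 0$ at the corner, the second-order expansion shows $y_i$ has a strict local extremum there, so the unique trajectory through the corner grazes $y_i=\theta_i$ tangentially and must arrive from $10$ or $01$ according to the sign of $v=\dot{x}_i(C)$. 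Your computation buys several things the paper's argument does not make explicit: it unifies the paper's two parameter cases (when $\phi_i=1$ but \eqref{eq:theta} fails, you simply get $v<0$ and land in your first case, so your parenthetical appeal to \eqref{eq:theta} is not even needed), it identifies the source and target domains exactly ($10\to 00$ or $01\to 11$), and it exhibits the trajectory through $C$ as the grazing curve $\Gamma_{u_i}^{(h_i)}$ or $\Gamma_{l_i}^{(h_i)}$, tying the lemma to Section~\ref{subsec:fixedpoints}. What the paper's comparison argument buys is independence from any Taylor expansion and a slightly more self-contained treatment of the degenerate parameter regime. One refinement you should make explicit: to fully dispatch the coincidence you flag (where $y_{i-1}$ switches exactly at passage), take the expansion one-sided on an approach interval $[t_0-\epsilon,t_0]$ on which $\phi_i$ is constant; the one-sided conclusion $y_i(t)-\theta_i\approx\tfrac{1}{2}c_i v (t-t_0)^2$ with $x_i(t)-\tfrac{d_i}{c_i}\theta_i\approx v(t-t_0)$ already pins the approach domain to $10$ or $01$ regardless of what happens after $t_0$, so no exclusion is actually required --- note the paper makes the analogous restriction by assuming $y_{i-1}>\theta_{i-1}$ during the approach.
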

				\begin{proof}
					
					Assume first that $0<\frac{d_i}{c_i}\theta_i<\frac{a_i}{b_i}$, $i\neq 1$, and $(x_i, y_i)$ is initially in the domain 00. We assume as well that $y_{i-1}>\theta_{i-1}$ as otherwise $(x_i, y_i)$ will not leave 00. By existence and uniqueness, there is a unique trajectory that intersects the point $( \frac{d_i}{c_i}\theta_i,\theta_i)$; denote this trajectory by $\Delta$. We claim that this trajectory can never enter 00. Indeed, suppose it does. Pick a point $(a,b)\in 00$ that is on $\Delta$. For any $c$ with $b<c<\theta_i$, there exists a trajectory that intersects $(a,c)$. By uniqueness this trajectory cannot cross $\Delta$, and so by the direction of the flow in the $x_i$ direction, it must leave 00 via $y_i=\theta$. However, this contradicts the flow in the $y_i$ direction, so no such $\Delta$ can exist. The cases in which $i=1$ and for which we start in $11$ are analogous.
					
					On the other hand, if $\frac{a_i}{b_i}< \frac{d_i}{c_i}\theta_i$, then both focal points for $(x_i,y_i)$ lie in 00. If a trajectory starts in this domain, then it will not leave and the claim follows. Otherwise, the claim follows by the previous argument. The case in which we start in $11$ is analogous. 
				\end{proof}
				\begin{newthm}
				\label{prop: cycleprop}
					In the limit $q\rightarrow 0$, there exists a qualitative cycle for a negative feedback system if and only if $0<\theta_i<\frac{c_i}{d_i}\frac{a_i}{b_i}$ for each $i$.
				\end{newthm}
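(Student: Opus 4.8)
The plan is to prove the two implications separately, and to read the hypothesis $0<\theta_i<\frac{c_i}{d_i}\frac{a_i}{b_i}$ as the requirement that, for each gene $i$, the threshold $\theta_i$ lie strictly between the two focal values of $y_i$, namely $0$ (attained when $f_i=0$) and $\frac{c_i}{d_i}\frac{a_i}{b_i}$ (attained when $f_i=1$). Equivalently, since this is the same as $0<\frac{d_i}{c_i}\theta_i<\frac{a_i}{b_i}$, the pseudo-threshold $\frac{d_i}{c_i}\theta_i$ of Table~\ref{table1} lies strictly between the focal values $0$ and $\frac{a_i}{b_i}$ of $x_i$. By the canceling method of Section~\ref{subsec:cancel} I may assume the loop carries exactly one negative edge. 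A qualitative cycle means a directed cycle of the PTD that trajectories traverse; Lemma~\ref{lem;nobranch} makes its edges well defined, since the hypothesis gives $\frac{a_i}{b_i}\neq\frac{d_i}{c_i}\theta_i$.

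For necessity I would argue by contraposition. If the hypothesis fails for some $i$ then, thresholds being positive, it must be that $\theta_i\geq\frac{c_i}{d_i}\frac{a_i}{b_i}$, the top of the invariant range of $y_i$ given by~(\ref{eqn;invariant}). Hence $y_i$ is eventually trapped in $[0,\frac{c_i}{d_i}\frac{a_i}{b_i}]\subseteq[0,\theta_i]$ and, by Assumption~\ref{ass; ass1} (no focal point on a threshold), never reaches $\theta_i$ from below. The pair $(x_i,y_i)$ therefore never enters states $11$ or $01$, so $f_{i+1}$ is eventually constant and the single-edge loop is cut into a feed-forward cascade. Each pair of that cascade then receives an eventually constant input, hence converges to an equilibrium (a stable node by Proposition~\ref{stableprop}); propagating around the severed loop shows every variable converges, so no qualitative cycle can exist.

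For sufficiency, assume the hypothesis for every $i$. I would first pin down the forced internal dynamics of one pair under an oscillating input. When the input to gene $i$ is ON, the focal value $\frac{a_i}{b_i}$ of $x_i$ exceeds $\frac{d_i}{c_i}\theta_i$, so $x_i$ rises past the pseudo-threshold and then drives $y_i$ above $\theta_i$ (possible because $\frac{c_i}{d_i}\frac{a_i}{b_i}>\theta_i$); when the input is OFF, $x_i$ and then $y_i$ fall back below their thresholds. Read off Table~\ref{table1}, this is the four-state internal loop $00\to10\to11\to01\to00$. The decisive point is that Lemma~\ref{lem:nocorners} forbids the pair from leaving $00$ or $11$ through the corner $(\frac{d_i}{c_i}\theta_i,\theta_i)$, which rules out the diagonal shortcuts $00\to11$ and $11\to00$ and so forces passage through $10$ and $01$ in exactly this order.

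Finally I would chain these local loops into a global one. Because the loop has a single negative edge, the ON/OFF input of gene $i+1$ is determined by the sign of $y_i-\theta_i$ (inverted across the one negative edge), so the threshold crossings march around the loop with a fixed phase lag and close up into a single cyclic sequence of words in $A^{2n}$; Lemma~\ref{lem;nobranch} guarantees transversality of the flow across every boundary met along the way, making the sequence a genuine directed cycle of the PTD. The hard part is precisely this closing-up step: one must check that advancing every gene once through its four-state loop, synchronised by the feedback with its single sign reversal, returns to the starting word without the flow either stalling at the interior equilibrium or admitting an alternative exit. This is where Lemma~\ref{lem:nocorners} does the real work, for without the exclusion of the corner shortcuts the trajectories could collapse toward the fixed point rather than circulating, and the cyclic sequence would fail to be invariant.
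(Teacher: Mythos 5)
Your proposal is correct and takes essentially the same route as the paper's own proof: necessity by noting that a violated threshold condition traps the relevant focal point on one side of $\theta_i$, so that pair stops switching and the failure cascades around the loop, and sufficiency by invoking Lemma~\ref{lem:nocorners} to force each pair through $00\to10\to11\to01\to00$ and chaining these local loops around the (single) negative edge into the global cycle, with Lemma~\ref{lem;nobranch} ensuring well-defined edge directions. The only cosmetic difference is that you phrase necessity via the invariant range~(\ref{eqn;invariant}) and an explicit feed-forward cascade, whereas the paper says directly that the focal point remains in $00$ (resp.\ $11$) and switching is successively precluded downstream --- the same argument in different words.
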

				\begin{proof}
				The assumption that $0<\theta_i<\frac{c_i}{d_i}\frac{a_i}{b_i}$ means precisely that for each $i$ the focal point switches from domain 00 to domain 11 and vice versa. 		
						
					Suppose that a trajectory starts in the pseudo-state domain $00\cdots 00$. Then, every pair $(x_i,y_i)$, $i\neq 1$, is ``switched off" and tends towards the focal point in 00. On the other hand, $(x_1,y_1)$ is ``switched on" and tends towards the focal point in 11, by assumption. By Lemma~\ref{lem:nocorners}, the path it must take to get there is $00 \to 10 \to 11$. Once $(x_i,y_i)$ enters the domain 11, the pair $(x_2,y_2)$ is switched on and by the same argument it follows the same path. Repeating this argument, the overall path begins
						\begin{equation}
						\label{1halfcycle}
							000\cdots 00 \rightarrow 100\cdots 00 \rightarrow 110\cdots 00 \rightarrow \dots \rightarrow 111\cdots 11.
						\end{equation}
					Once the pair $(x_{n},y_n)$ enters the domain 11, the pair $(x_1,y_1)$ switches off, and tends towards the domain 00 via the path $11 \to 01 \to 00$. This occurs for each $i$ in turn, so the path continues
						\begin{equation}
						\label{2halfcycle}
					        111\cdots 11 \rightarrow 011\cdots 11 \rightarrow 001\cdots 11 \rightarrow \dots \rightarrow 000\cdots 00.
						\end{equation}
					Concatenating~\eqref{1halfcycle} and~\eqref{2halfcycle} yields the whole cycle. Since each edge of our graph has a unique direction by Lemma~\ref{lem;nobranch} and no edges along the cycle point away from the cycle (no branching), each adjacent edge must point inwards towards the cycle. This makes the cycle qualitatively stable.
					
					Conversely, suppose that the focal point of $(x_i,y_i)$ remains in 00 (resp. 11) for at least one $i$. Then, for such pairs, regardless of where they start, they tend towards the pseudo-state domain $\cdots 00 \cdots$ (resp. $\cdots 11 \cdots$). Once there, they never leave and so no more switching can occur for the $(i+1)^{\textup{th}}$ variable, which prevents switching for the $(i+2)^{\textup{th}}$ variable and so on. Eventually no more switching will occur and therefore no cycle can exist. 
				\end{proof}
		\subsection{Periodic Solutions}\label{subsec:period}
			Although the previous proposition establishes the existence of qualitative cycles, it is natural to ask when a periodic orbit corresponding to such a qualitative cycle actually exists. After all, in principle, it is possible for damped oscillations to occur in a qualitatively stable sequence of domains, converging to a stationary point (a singular stationary point in the limit $q\to 0$). In this section we give a proposition that answers the question of existence of periodic orbits in the smooth case ($q>0$). We start with a necessary lemma. 
				\begin{newlem}
					There exists a unique fixed point for any monotone negative feedback loop in the form of~(\ref{eqn;cfs,chng}).
				\label{fixedpointlem}
				\end{newlem}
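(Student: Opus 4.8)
The plan is to collapse the $2n$-dimensional fixed-point problem into a single scalar equation traversing the loop once. Working with the cleaner gene-indexed form (\ref{eqn;cyclicfeedbackdefn}) (equivalent to (\ref{eqn;cfs,chng}) after the renumbering), setting each right-hand side to zero gives, for every gene $i$, $x_i=\frac{a_i}{b_i}f_i(y_{i-1})$ and $y_i=\frac{c_i}{d_i}x_i$, so that $y_i=g_i(y_{i-1})$ with $g_i(u):=\frac{c_i a_i}{d_i b_i}f_i(u)$ (indices mod $n$, $y_0=y_n$). Composing these relations once around the cycle, any fixed point must satisfy $y_n=G(y_n)$ where $G:=g_n\circ g_{n-1}\circ\cdots\circ g_1$. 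Conversely, a solution $y_n^*$ of this scalar equation determines $y_1^*,\ldots,y_{n-1}^*$ uniquely by forward substitution through the $g_i$, and then each $x_i^*=\frac{a_i}{b_i}f_i(y_{i-1}^*)$ is uniquely determined; hence the full fixed point is unique exactly when $G$ has a unique fixed point, and the problem reduces to that one-dimensional question.

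Next I would record the structural properties of $G$. Each $g_i$ is continuous and monotone (increasing or decreasing according to $f_i$), so $G$ is continuous and monotone, with monotonicity type governed by the parity of the number of decreasing factors. Since $\partial\dot{x}_i/\partial y_{i-1}=a_i f_i'(y_{i-1})$ is negative precisely when $f_i$ is decreasing, the negative edges of the loop correspond to decreasing $g_i$; as we are in a negative feedback loop, condition (\ref{negativefeedbackcondition}) holds an odd number of times, so an odd number of the $g_i$ are decreasing and $G$ is therefore (non-strictly) decreasing. Moreover, because $f_n\in[0,1]$ we have $G(u)\in[0,y_{n,{\rm max}}]$ with $y_{n,{\rm max}}=\frac{c_n a_n}{d_n b_n}$ from (\ref{eqn;invariant}), so $G$ maps the invariant interval $[0,y_{n,{\rm max}}]$ into itself.

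Existence and uniqueness then follow from a one-dimensional argument applied to $H(u):=G(u)-u$ on $[0,y_{n,{\rm max}}]$. Since $G$ is non-increasing and $-u$ is strictly decreasing, $H$ is strictly decreasing and hence has at most one zero; note that this requires only monotonicity, not strict monotonicity, of $G$, which is what makes the argument robust to flat pieces of the $f_i$. For existence, the self-mapping property gives $H(0)=G(0)\ge 0$ and $H(y_{n,{\rm max}})=G(y_{n,{\rm max}})-y_{n,{\rm max}}\le 0$, so the intermediate value theorem yields a zero of $H$, and strict monotonicity makes it the unique $y_n^*$. Back-substitution as above recovers the unique fixed point. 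The only step demanding care is the sign/parity bookkeeping that forces $G$ to be decreasing; the canceling method of Section~\ref{subsec:cancel}, reducing any negative feedback loop to one with a single negative connection, makes this transparent, though the parity argument applies to the original loop directly as well.
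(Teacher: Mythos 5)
Your proposal is correct and takes essentially the same route as the paper: both collapse the fixed-point conditions by composing the loop once into a single scalar monotone equation (the paper equates the line $\frac{b_1}{a_1}w_1$ to a composed, bounded, decreasing right-hand side and invokes the single-negative-edge normalization from the canceling method; you fold the gains into $G=g_n\circ\cdots\circ g_1$ and compare with the identity, using the odd-parity count of decreasing factors directly). Your explicit observation that $H(u)=G(u)-u$ is strictly decreasing even when $G$ is only weakly monotone, plus the IVT endpoints on $[0,y_{n,\mathrm{max}}]$, is a slightly more careful rendering of the same argument and is a welcome refinement rather than a different method.
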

				\begin{proof}
					The system (\ref{eqn;cfs,chng}) has a fixed point if and only if 
						\begin{equation}
							\dfrac{b_{1}}{a_{1}}w_{1}=f_{1}\left(\dfrac{c_{2n}a_{2n-1}}{d_{2n}b_{2n-1}}f_{2n-1}\left(\cdots\dfrac{c_{i+1}a_{i}}{d_{i+1}b_{i}}f_{i}\left(\cdots\dfrac{c_{4}a_{3}}{d_{4}b_{3}}f_{3}\left(\dfrac{c_{2}}{d_{2}}w_{1}\right)\right)\right)\right)
						\label{suffcondish}
						\end{equation}
					has a solution. Since $f_{1}$ is a non-negative, monotone decreasing  function, $f_{1}(0)>0$. By these same properties, $\lim_{w_{1} \rightarrow \infty} f_{1}(w_{1})\geq 0$ and is finite. Since $\frac{b_{1}}{a_{1}}w_{1}$ is a line through the origin with positive slope, it follows that (\ref{suffcondish}) has exactly one solution.
				\end{proof}
	
			We answer the question of periodic orbits with the following argument. In a monotone negative feedback loop~(\ref{eqn;cfs,chng}) it is clear that $\frac{\partial \dot{w}_{i}}{\partial w_{i-1}}>0$, $\frac{\partial \dot{w}_{i}}{\partial w_{i}}<0$ is true for each $i=2,\dots,2n$ and that $\frac{\partial \dot{w}_{1}}{\partial w_{2n}}<0$. It is also clear that $\dot{w}_{i}(0,0)\geq 0$ and $\dot{w}_{1}(w_{2n},0)\geq 0$. Furthermore, letting $w_1^*$ be the fixed point of Equation~(\ref{suffcondish}), and $w_i^*$ be the corresponding fixed point for $w_i$ ($w_2^*=\frac{c_2}{d_2}w_1^*$, etc.), we have that for $w_{2n}>w_{2n}^{*}$ and $w_{1}>w_{1}^{*}$
				\begin{displaymath}
					\begin{array}{lcl}
						\dot{w}_{1}& = & -b_{1}w_{1}+ a_{1}f_{1}(w_{2n}) \\
						 & < & -b_{1}w_{1}^{*}+ a_{1}f_{1}(w_{2n}^{*})=0,
					\end{array}
				\end{displaymath}
			and if $w_{2n}<w_{2n}^{*}$ and $w_{1}<w_{1}^{*}$ then 
				\begin{displaymath}
					\begin{array}{lcl}
						\dot{w}_{1}& = & -b_{1}w_{1}+ a_{1}f_{1}(w_{2n}) \\
						 & > & -b_{1}w_{1}^{*}+ a_{1}f_{1}(w_{2n}^{*})=0.
					\end{array}
			\end{displaymath}
			Observing that $\frac{\partial \dot{w}_{1}}{\partial w_{1}}=-b_{1}$, and that the fixed point $w^*=(w_1^*,\ldots,w_{2n}^*)$ has strictly positive coordinates, we can apply Theorem 1 in~\cite{Hastings1977} to establish the following proposition.
				\begin{newthm}
					If the Jacobian, $J(w^{*})$ of the monotone feedback system~(\ref{eqn;cfs,chng}), evaluated at the unique fixed point, $w^*$, has at least one eigenvalue with positive real part and no repeated eigenvalues, then (\ref{eqn;cfs,chng}) has a non-constant periodic solution.
				\label{evenperiodprop} 
				\end{newthm}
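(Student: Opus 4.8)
The plan is to invoke the cyclic negative-feedback periodic-orbit theorem of Hastings, Tyson and Webster (Theorem~1 of~\cite{Hastings1977}) directly, so that the work reduces entirely to checking that~(\ref{eqn;cfs,chng}) satisfies its structural hypotheses at the fixed point $w^*$. That theorem applies to a cyclic system in which each $\dot{w}_i$ depends only on $w_i$ and its predecessor $w_{i-1}$, the self-coupling $\partial\dot{w}_i/\partial w_i$ is strictly negative, the nearest-neighbour couplings $\partial\dot{w}_i/\partial w_{i-1}$ have fixed signs whose product is negative, there is a bounded forward-invariant region containing a single equilibrium in its interior, and the Jacobian at that equilibrium has an eigenvalue with positive real part; under these conditions it produces a non-constant periodic orbit.

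First I would pin down the normal form and sign pattern. By the canceling method of Section~\ref{subsec:cancel} I may assume~(\ref{eqn;cfs,chng}) has exactly one negative connection, $\partial\dot{w}_1/\partial w_{2n}=a_1 f_1'(w_{2n})<0$, with all other couplings positive; the alternating mRNA/protein form makes clear that each equation involves only $w_i$ and $w_{i-1}$ and that the self-couplings are the strictly negative decay rates $-b_i,-d_i$, as recorded for $\partial\dot{w}_1/\partial w_1=-b_1$. Next I would supply the invariant region: the box $\prod_i[0,w_{i,\mathrm{max}}]$ from~(\ref{eqn;invariant}) is forward invariant because on each face the flow points inward, using $\dot{w}_i(0,0)\ge 0$, $\dot{w}_1(w_{2n},0)\ge 0$, and the fact that the decay terms drive any $w_i$ exceeding $w_{i,\mathrm{max}}$ back down. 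Lemma~\ref{fixedpointlem} gives the unique equilibrium $w^*$, its coordinates are strictly positive, and the oscillation-permitting condition~(\ref{eq:theta}) keeps each threshold interior to its invariant interval, so $w^*$ lies strictly inside the box and off every coordinate face. The remaining two hypotheses---an eigenvalue of $J(w^*)$ with positive real part and a simple spectrum---are exactly the assumptions of the proposition, so Theorem~1 of~\cite{Hastings1977} applies and delivers the periodic solution.

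The step I expect to require the most care is the invariance-and-interiority check rather than the spectral input: one must verify inward flow on \emph{all} faces of the box, including the upper faces $w_i=w_{i,\mathrm{max}}$ and the faces for the even (protein) coordinates, rather than only the lower faces singled out in the preamble, and confirm that $w^*$ cannot sit on the boundary, so that the instability detected by the eigenvalue condition forces recurrence strictly within the punctured box. A secondary point is matching the regularity and sign conventions of~\cite{Hastings1977} to ours; since each $f_i$ is $C^1$, monotone and valued in $[0,1]$ and the remaining couplings are affine in the neighbour variable, these conventions are met, and the no-repeated-eigenvalue hypothesis is needed only to exclude the degenerate spectral configurations that theorem rules out.
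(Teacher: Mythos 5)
Your proposal is correct and follows essentially the same route as the paper: after the canceling method reduces the loop to a single negative connection, the paper likewise verifies the sign conditions $\partial\dot{w}_i/\partial w_{i-1}>0$, $\partial\dot{w}_i/\partial w_i<0$, $\partial\dot{w}_1/\partial w_{2n}<0$, the boundary conditions $\dot{w}_i(0,0)\geq 0$ and $\dot{w}_1(w_{2n},0)\geq 0$, the boundedness inequalities ($\dot{w}_1<0$ when $w_1>w_1^*$ and $w_{2n}>w_{2n}^*$, and the reverse), and the unique strictly positive fixed point from Lemma~\ref{fixedpointlem}, then applies Theorem~1 of~\cite{Hastings1977} exactly as you do. Your framing of the boundedness hypotheses via forward invariance of the box $\prod_i[0,w_{i,\mathrm{max}}]$ is a harmless repackaging of the same check.
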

				
It is clear that when the assumption in Proposition~\ref{prop: cycleprop} fails, {\em i.e.}, when $0<\theta_i<\frac{c_i}{d_i}\frac{a_i}{b_i}$ fails for some $i$, then there is no qualitative cycle for small enough $q$ (steep enough sigmoids), and the fixed point, $w^*$, lies in its own regular domain and is therefore stable. In such a case, there can be no periodic orbit. When Proposition~\ref{prop: cycleprop} gives a qualitative cycle, then Proposition~\ref{evenperiodprop} determines whether there is a periodic orbit, or only a damped oscillation. 

It is also straightforward to apply in the current case existing theorems for stability as well as existence of periodic orbits under appropriate conditions, mainly on eigenvalues at the fixed point, such as Theorem 4.3 of Mallet-Paret and Smith~\cite{MPS1990}.
				
				\begin{exmp}
					Consider the 2-gene monotone negative feedback loop 
						\begin{equation}
							\begin{array}{l}
								\dot{x}_{1}=1+\frac{3}{2}(1-Z_{2})-\frac{7}{8}x_{1} \\
								\dot{y}_{1}=x_{1}-y_{1} \\
								\dot{x}_{2}=1+\frac{3}{2}Z_{1}-\frac{7}{8}x_{2} \\
								\dot{y}_{2}=x_{2}-2y_{2},
							\end{array}
						\label{cyclicfeedex}
						\end{equation}
					where $\theta_{1}=2$, $\theta_{2}=1$, and $Z_{i}=H(y_{i},\theta_{i},q=0.01)$. The state transition diagram for (\ref{cyclicfeedex}) is given in Figure~(\ref{fig:cycle1}). The only cycle that solutions can enter is the cycle described in Proposition~\ref{prop: cycleprop}. 
						\begin{figure}[!htb]
							\centering
							\includegraphics[scale=.45]{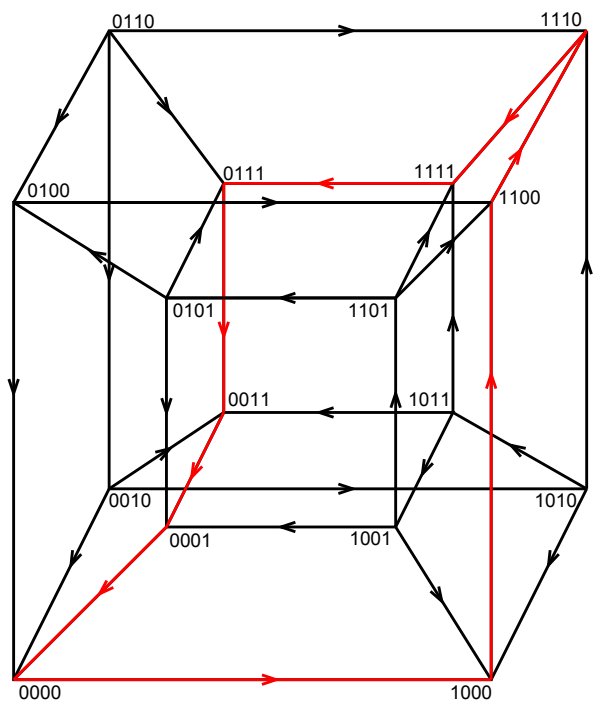}
							\caption{Pseudo-state transition diagram for (\ref{cyclicfeedex}) with the unique cycle in red}
						\label{fig:cycle1}
						\end{figure}
					We note further that there is a fixed point at $(2,2,2,1)$. The Jacobian of (\ref{cyclicfeedex}) is 
						\begin{equation}
						J(2,2,2,1)=\left(
							\begin{array}{cccc}
								-\frac{7}{8} & 0 & 0 & -\frac{75}{2} \\
								1 & -1 & 0 & 0 \\
								0 & \frac{75}{2} & -\frac{7}{8} & 0 \\
								0 & 0 & 1 & -2
							\end{array} \right),
						\label{matrix;(2212)}
						\end{equation}
					which has eigenvalues $\lambda_{1,2}=-5.53045 \pm 4.3162i$ and $\lambda_{3,4}=3.15545 \pm 4.31828i$. Since we have at least one eigenvalue with positive real part and no repeated eigenvalues, Proposition~\ref{evenperiodprop} guarantees that there exists a non-constant periodic orbit. Moreover, by inspection of the PTD, this orbit must be contained in the cycle. 
				\label{cycleex}
Note that
			the two gene system given by (\ref{eqn;instblex}) with $\theta_{11}=1$, $\theta_{12}=2$, $\theta_{21}=\frac{1}{2}$, and $\theta_{22}=1$. From Example~\ref{prop2ex}, we have calculated $\Gamma^{(0)}_{u_{1}}$ and $\Gamma^{(0)}_{u_{2}}$, from which it follows that
				\begin{equation*}
					\begin{array}{l}
						\mathcal{R}^{(0)}_{c_{1}}=\{(y_{1},x_{1})\in\mathbb{R}_{+}^{2}|0<y<1,\ 0<x<\Gamma^{(0)}_{u_{1}}\} \\
						\mathcal{R}^{(0)}_{c_{2}}=\{(y_{2},x_{2})\in\mathbb{R}_{+}^{2}|0<y<\frac{1}{2},\ 0<x<\Gamma^{(0)}_{u_{2}}\},
					\end{array}
				\end{equation*}
			so that $\mathcal{R}^{(00)}_{c}=\{(y_{1},x_{1},y_{2},x_{2})\in\mathbb{R}_{+}^{4}|0<y<1,\ 0<x<\Gamma^{(0)}_{u_{1}}, \ 0<y<\frac{1}{2},\ 0<x<\Gamma^{(0)}_{u_{2}}\}$. 
			By Proposition~\ref{fixpointprop}, if we take an initial condition $(x^{(00)}_{1},y^{(00)}_{1},x^{(00)}_{2},y^{(00)}_{2})\in\mathcal{R}^{(00)}_{c}$, the corresponding trajectory will converge to the fixed point $(0,0,0,0)$. 
			
			However, if we take $x_{1}^{(00)}>\Gamma^{(0)}_{u_{1}}$, then by Proposition~\ref{fixpointprop}, $y_{1}$ will cross $\theta_{11}=1$, at which point the trajectory enters $\mathcal{D}_{10}$. Once $y_{1}$ crosses it's threshold, the pair $(x_{1},y_{1})$ flows asymptotically towards $(\frac{8}{7}, \frac{8}{7})$. Simultaneously, the pair $(x_{2},y_{2})$ flows towards $(\frac{8}{7},\frac{4}{7})$. By the structure of the flow, once $y_{1}>1$, the region $\mathcal{R}:=\cup_{i\in K}\mathcal{P}_{i}$ where $K$ is the set of all permutations of strings of length four with elements 1 or 2, is invariant. Thus, once $y_{2}$ crosses $\theta_{21}=\frac{1}{2}$, the equations permanently 
take the form of~(\ref{cyclicfeedex}).
			
			Once $y_{2}$ enters $\mathcal{R}$, (\ref{eqn;instblex}) enters the cycle described in Proposition~\ref{prop: cycleprop}. Since $(2,2,2,1)$ is still a fixed point, (\ref{matrix;(2212)}) is the Jacobian at $(2,2,2,1)$. From Example~\ref{cycleex}, this has two eigenvalues with positive real part, so (\ref{eqn;instblex}) has at least one non-constant periodic solution, by Proposition~\ref{evenperiodprop}. 
			
			This example illustrates how the long term dynamics or the system are dependent on their initial conditions with respect to the curves $\Gamma_{u_{i}}^{(h_{i})}$ or $\Gamma_{l_{i}}^{(h_{i})}$. 
			\qed
	
\end{exmp}

	\section{Discussion}

This study has shown how it is possible to reproduce or extend results on Glass networks and corresponding steep sigmoidal networks (Model 1) to the context of transcription-translation networks (Model 2), and to clarify what differences arise.
The pseudo-state transition diagram takes the place of the state transition diagram for Glass networks, and re-establishes the property that edges can only be traversed in one direction. For transcription-translation networks, fixed points in regular domains (or even pseudo-state domains) are not globally attracting within their domain. However, black and white walls are avoided, and all walls are transparent. This can lead to rapid oscillations across walls. Interestingly, nonuniqueness can still arise in the step-function limit, where trajectories graze walls at pseudo-state boundary intersections. Integrating trajectories from switching point to switching point can still be done for transcription-translation networks in the step-function limit, but this typically involves solutions to transcendental equations that must be computed numerically. A result on periodic behaviour in negative feedback loops has been proven, similar to results for Glass networks. This is an important example, but it remains for future work to carry out the analysis of more complex structures for this class of transcription-translation networks, as has been done for Glass networks. We anticipate, for example, that results for existence of periodic solutions or multistability in more general network structures (within which negative or positive feedback loops occur) could be established for our class of transcription-translation models, similar to such results for Glass networks (e.g. ~\cite{Snoussi1989}) and other classes of networks (e.g.~\cite{Comet2013}).

	\section*{Acknowledgments}
		The authors would like to thank Dr. Kieka Mynhardt for helpful discussions concerning Section~\ref{subsec:cancel}, and the Natural Sciences and Engineering Research Council of Canada for funding this research. 
	
	\section*{References}

		 \newpage








\end{document}